\newtheorem{thm}{Theorem}[section]
\newtheorem{lem}[thm]{Lemma}
\newtheorem{prop}[thm]{Proposition}
\newtheorem{cor}[thm]{Corollary}
\theoremstyle{remark}
\newtheorem{remark}[thm]{Remark}
\newtheorem{defn}[thm]{Definition}
\def\qq{\mathbb{Q}}
\def\pp{\mathbb{P}}
\def\rr{\mathbb{R}}
\def\zz{\mathbb{Z}}
\def\cc{\mathbb{C}}
\def\nn{\mathbb{N}}
\def\oo{\mathcal{O}}
\def\jj{\mathcal{J}}
\newcommand{\supp}{\operatorname{supp}}
\renewcommand{\div}{\operatorname{div}}
\newcommand{\Div}{\operatorname{Div}}
\newcommand{\ord}{\operatorname{ord}}
\numberwithin{equation}{section}
\author{Robin de Jong}
\address{Robin de Jong, Mathematisch Instituut, Universiteit Leiden, PO Box 9512, 2300 RA
Leiden, The Netherlands}
\email{rdejong@math.leidenuniv.nl}
\author{J. Steffen M\"{u}ller}
\address{J. Steffen M\"{u}ller, Fachbereich Mathematik, Universit\"{a}t Hamburg, Bundesstrasse 55, 20146 Hamburg, Germany}
\email{jan.steffen.mueller@uni-hamburg.de}
\begin{document}

\title{Canonical heights and division polynomials}

\begin{abstract}  We discuss a new method to compute the canonical height of an algebraic
    point on a hyperelliptic jacobian over a number field. The method does not require any
    geometrical models, neither $p$-adic nor complex analytic ones. In the case of genus~2
    we also present a version that requires no factorisation at all. 
    The method is based on a recurrence relation for the `division polynomials' associated to hyperelliptic jacobians, and a diophantine approximation result due to Faltings.
\end{abstract}

\subjclass[2010]{11G10, 11G30, 11G50, 14H40, 14H45}

\keywords{Canonical height, division polynomial, hyperelliptic curve, local height}

\maketitle
\thispagestyle{empty}

\section{Introduction}

In \cite{ewNY} G. Everest and T. Ward show how to approximate to high precision the canonical height of an algebraic point on an elliptic curve $E$ over a number field $K$ with a limit formula using the (recurrence) sequence of \emph{division polynomials} $\phi_n$ associated to $E$, and a diophantine approximation result.

The $\phi_n$ have natural analogues for jacobians of hyperelliptic curves. In \cite{uc} Y. Uchida shows how to obtain recurrence relations for the $\phi_n$ for hyperelliptic jacobians of dimension $g \geq 2$. Further there exists a suitable analogue of the diophantine approximation result employed by Everest and Ward, proved by G.~Faltings. In this paper we derive a limit formula for the canonical height of an algebraic point on a hyperelliptic jacobian from these inputs.

We have implemented the resulting method for computing canonical heights in
\verb+Magma+ for $g=2$. The method does not require geometrical models, neither
$p$-adic nor complex analytic ones. If the curve is defined over $\qq$ 
and the coordinates of the point are integral, then it also requires no factorisation. It does need either large integer arithmetic or large $p$-adic and real precision, however. In principle the implementation can be extended to higher genera.

\section{Statement of the main results}

Let $K$ be a number field with ring of integers $\oo_K$ and let $(X, o)$ be a pointed hyperelliptic curve of genus $g \geq 2$ over $K$ given by an
equation $y^2 = f(x)$ with $f \in \oo_K[x]$ monic of odd degree $2g+1$, where $o$ is the
unique point at infinity.
Let $J$ denote the jacobian variety of $X$.
Then the theta divisor $\Theta$ on $J$ is the reduced and irreducible divisor on $J$ whose
support is given by the set of all points which can be represented by a divisor
$(p_1)+\ldots+(p_d) - d(o)$, where all $p_i \in X$ and $d < g$.
Equivalently, these are precisely the points whose reduced Mumford representation $(a(x), b(x))$ (cf. Section~\ref{divpolys})
satisfies $\deg(a) < g$.

For each integer $n \geq 1$ there exists a canonical `division polynomial' $\phi_n$ in the function field of $J$ over $K$, see Section~\ref{divpolys}.
We have
\[ \mathrm{div} \, \phi_n = [n]^*\Theta - n^2 \Theta \, . \]
For each place $v$ of $K$ we further have a canonical local height
function $\widehat{\lambda}_v$, see \cite{uc}, Section~7. These functions are determined by the key relations:
\[
    \log |\phi_n(p)|_v = - \widehat{\lambda}_v (np) + n^2 \widehat{\lambda}_v(p)
\]
for each integer $n\geq 1$, each place $v$ and generic $p \in
J(K_v)$, where $|\cdot|_v$ is the absolute value on $K_v$, normalized
as in Subsection~\ref{global}.

Let $p$ be a point in $J(K)$, not in $\supp(\Theta)$. Let $\widehat{h} \colon J(K) \to \rr$ be the canonical height with respect to the canonical principal polarization on $J$. We have the formula:
\[ [K \colon \qq] \, \widehat{h}(p) = \sum_v n_v \, \widehat{\lambda}_v(p) \, , \]
where $n_v$ is a standard local factor defined in Subsection~\ref{global}.
Put \[T(p) = \{ n \in \zz_{>0} \, | \, np \notin \supp(\Theta) \}.\] Then one can show that $T(p)$ is an infinite set. 

Our first result extends \cite[Theorem~3]{ewNY} and gives a limit formula for
the canonical local height $\widehat{\lambda}_v$ in terms of the division
polynomials.
The proof is based on a diophantine approximation result due to Faltings
(Theorem~\ref{faltings}).
\begin{thm} \label{locallimit} Let $v$ be any place of $K$ and $p \in J(K)
    \setminus \supp(\Theta)$ be a rational point.
Then $T(p)$ is an infinite set and the formula
\[ \widehat{\lambda}_v(p) = \lim_{n \to \infty \atop n \in T(p)} \frac{1}{n^2} \log |\phi_n(p)|_v  \]
holds.
\end{thm}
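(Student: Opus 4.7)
The plan is to rewrite the key relation
\[ \log |\phi_n(p)|_v = -\widehat{\lambda}_v(np) + n^2 \widehat{\lambda}_v(p) \]
in the equivalent form
\[ \frac{1}{n^2} \log |\phi_n(p)|_v = \widehat{\lambda}_v(p) - \frac{1}{n^2} \widehat{\lambda}_v(np), \]
valid for every $n \in T(p)$. The limit formula is then reduced to the single asymptotic claim that $\widehat{\lambda}_v(np)/n^2 \to 0$ as $n \to \infty$ along $T(p)$.

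First I would dispose of the infiniteness of $T(p)$. If $p$ is torsion of order $N$, then whether $np \in \supp(\Theta)$ depends only on $n$ modulo $N$, and since $p \notin \supp(\Theta)$ the entire arithmetic progression $1 + N\zz_{\geq 0}$ lies in $T(p)$. If instead $p$ has infinite order, let $H$ denote the Zariski closure of $\langle p \rangle$ in $J$, a positive-dimensional closed subgroup containing $p$. Were $T(p)$ finite, cofinitely many multiples of $p$ would lie in $\Theta$; taking Zariski closures forces the irreducible component of $H$ through $p$, which is of positive dimension, to be contained in $\Theta$, contradicting $p \in H \setminus \supp(\Theta)$.

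For the decay $\widehat{\lambda}_v(np)/n^2 \to 0$ the torsion case is immediate: the orbit $\{np\}$ is finite and $\widehat{\lambda}_v$ takes only finitely many values on $\{np : n \in T(p)\}$. For non-torsion $p$ one has $\widehat{h}(np) = n^2 \widehat{h}(p)$ with $\widehat{h}(p) > 0$, and Faltings' Diophantine approximation (Theorem \ref{faltings}) supplies an upper bound of the form $\widehat{\lambda}_v(np) \leq \epsilon \, \widehat{h}(np) + O_\epsilon(1)$ for every $\epsilon > 0$, where the implied constant depends on $\epsilon$, $v$, and $p$ but not on $n$. Combined with the elementary fact that $\widehat{\lambda}_v$ is bounded below on sets of bounded height outside any neighborhood of $\supp(\Theta)$, this yields $\widehat{\lambda}_v(np)/n^2 \to 0$ after letting $\epsilon \to 0$.

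The main obstacle is matching Theorem \ref{faltings} to the present setting: one must verify that its hypotheses accommodate the specific normalization of the canonical local height $\widehat{\lambda}_v$ taken from \cite{uc}, at an arbitrary archimedean or non-archimedean place $v$, and along the possibly singular theta divisor $\Theta$. Once Faltings' bound is in place in the required form, the body of the proof is essentially the one-line manipulation of the key relation displayed above.
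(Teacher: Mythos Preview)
Your approach is essentially the paper's: reduce via the key relation to the decay $\widehat{\lambda}_v(np)/n^2 \to 0$, split into torsion/non-torsion, handle infiniteness of $T(p)$ by a Zariski-closure argument, and obtain the upper bound from Faltings and the lower bound from boundedness below of the local height. One correction: the lower-bound step as you wrote it does not work, since the multiples $np$ have height $n^2\widehat{h}(p)\to\infty$ and may well approach $\supp(\Theta)$, so a bound ``on sets of bounded height outside any neighborhood of $\supp(\Theta)$'' is useless here; what you need (and what the paper uses) is the standard fact that a N\'eron function for an \emph{effective} divisor is bounded below on all of $J(K_v)\setminus\supp(\Theta)$, since its only singularities are logarithmic poles going to $+\infty$ along $\Theta$. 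Your worry about normalizations is harmless: any two canonical local heights for $\Theta$ differ by a constant, which disappears after dividing by $n^2$.
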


Let $S$ be a finite set of places of $K$. We put:
\[ \widehat{h}_S(p) = \frac{1}{[K\colon \qq]} \lim_{n \to \infty \atop n \in T(p)} \frac{1}{n^2} \log \prod_{v \in S} |\phi_n(p)|^{n_v}_v \, . \]
Theorem~\ref{locallimit} implies that the limit $\widehat{h}_S(p)$ exists, and gives the $S$-part of the canonical height of $p$.
\begin{thm} \label{limitexists} Assume
that $p$ is a point in $J(K)$, not in $\supp(\Theta)$. Then the limit $\widehat{h}_S(p)$ exists, and the formula
\[ [K:\qq] \, \widehat{h}_S(p) = \sum_{v\in S} n_v \, \widehat{\lambda}_v(p) \]
holds.
\end{thm}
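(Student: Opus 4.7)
The plan is to observe that Theorem~\ref{limitexists} is a nearly immediate corollary of Theorem~\ref{locallimit}, essentially because taking logarithms turns the finite product over $S$ into a finite sum, and finite sums commute with limits.

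First I would expand the defining expression for $\widehat{h}_S(p)$. For every $n \in T(p)$ one has
\[
\frac{1}{n^2} \log \prod_{v \in S} |\phi_n(p)|_v^{n_v}
  = \sum_{v \in S} n_v \cdot \frac{1}{n^2} \log |\phi_n(p)|_v.
\]
Here each summand is well defined: since $n \in T(p)$, the point $np$ avoids $\supp(\Theta)$, and for generic $p \in J(K_v)$ the relation $\log|\phi_n(p)|_v = -\widehat{\lambda}_v(np) + n^2\widehat{\lambda}_v(p)$ ensures $\phi_n(p) \neq 0$, so the logarithm makes sense at every place.

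Next I would apply Theorem~\ref{locallimit} termwise. For each of the finitely many places $v \in S$, that theorem provides the existence of
\[
\lim_{\substack{n \to \infty \\ n \in T(p)}} \frac{1}{n^2} \log|\phi_n(p)|_v = \widehat{\lambda}_v(p).
\]
Because $S$ is finite, the sum of these limits is the limit of the sums. Therefore
\[
\lim_{\substack{n \to \infty \\ n \in T(p)}} \frac{1}{n^2} \log \prod_{v \in S} |\phi_n(p)|_v^{n_v}
  = \sum_{v \in S} n_v \, \widehat{\lambda}_v(p),
\]
which proves both that $\widehat{h}_S(p)$ exists and that it equals $\frac{1}{[K:\qq]} \sum_{v \in S} n_v \widehat{\lambda}_v(p)$ after dividing by $[K:\qq]$.

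There is no real obstacle here: the non-trivial analytic input — the diophantine approximation argument of Faltings — has already been absorbed into Theorem~\ref{locallimit}. The only minor point worth checking is that the set $T(p)$ used in the limit is the same for every $v \in S$, so that one is genuinely taking finitely many limits along a common subsequence; this is built into the definition of $T(p)$, which depends on $p$ alone and not on any place. Consequently the statement follows directly by linearity of the limit.
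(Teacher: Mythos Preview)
Your proof is correct and follows essentially the same route as the paper: expand the logarithm of the finite product over $S$ into a sum, interchange the limit with the finite sum, and apply Theorem~\ref{locallimit} termwise. Your additional remarks on $\phi_n(p)\neq 0$ and on $T(p)$ being place-independent are harmless clarifications of points the paper leaves implicit.
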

Our next result expresses $\widehat{h}(p)$ in terms of $\widehat{h}_S(p)$ and
$\widehat{h}_S(2p)$, for a suitable set $S$.
Let $\Delta = 2^{4g}\mathrm{disc}(f)$ denote the discriminant of $X$ defined in \cite{lo}.
Then the curve $X$, and hence the jacobian $J$, has good reduction outside
the set $S_\mathrm{bad}$ of places of $K$ dividing the ideal $(\Delta)$.
Let $S_\infty$ be the set of archimedean places of $K$.

\begin{thm} \label{formulaheight}
Let $p\in J(K) $ and assume
that both $p$ and $2p$ are not in $\supp(\Theta)$.
Let $S$ be a finite set of places of $K$ containing $S_\mathrm{bad} \cup S_\infty$, such that for all $v \notin S$ one has that neither $p$ nor $2p$ lies on the theta divisor modulo $v$.
Then the formula
\[ \widehat{h}(p) = -\frac{1}{3} \widehat{h}_S(p) + \frac{1}{3} \widehat{h}_S(2p) \]
holds.
\end{thm}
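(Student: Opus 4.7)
The plan is to reduce the stated formula to a vanishing statement for local heights at places $v \notin S$, and to invoke the behavior of $\widehat{\lambda}_v$ at non-archimedean places of good reduction. First I would apply the global decomposition $[K\colon\qq]\,\widehat{h}(q) = \sum_v n_v \widehat{\lambda}_v(q)$ and the partial version $[K\colon\qq]\,\widehat{h}_S(q) = \sum_{v\in S} n_v \widehat{\lambda}_v(q)$ from Theorem~\ref{limitexists} to $q = p$ and $q = 2p$. Invoking the quadratic identity $\widehat{h}(2p) = 4\widehat{h}(p)$, the target formula $\widehat{h}(p) = -\tfrac{1}{3}\widehat{h}_S(p) + \tfrac{1}{3}\widehat{h}_S(2p)$ becomes equivalent to the single identity
\[
\sum_{v\notin S} n_v\bigl(\widehat{\lambda}_v(2p) - \widehat{\lambda}_v(p)\bigr) = 0.
\]

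I would prove this termwise, by showing $\widehat{\lambda}_v(p) = \widehat{\lambda}_v(2p) = 0$ for every $v \notin S$. By the assumption $S \supseteq S_\mathrm{bad} \cup S_\infty$, any such $v$ is non-archimedean and $J$ has good reduction at $v$; by hypothesis, neither $p$ nor $2p$ reduces into $\Theta$ modulo $v$. At a non-archimedean place of good reduction, Uchida's canonical local height coincides with the Néron local height on the Néron model $\mathcal{J}/\mathcal{O}_v$, and the latter is computed as the intersection multiplicity of the section representing the point with the closure $\bar{\Theta}$ of $\Theta$ in $\mathcal{J}$. Since the reductions of $p$ and of $2p$ both lie in the complement of $\bar{\Theta}$, these intersections are zero and both local heights vanish.

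The main obstacle is to justify that Uchida's normalization of $\widehat{\lambda}_v$, pinned down by the relation $\log|\phi_n(p)|_v = -\widehat{\lambda}_v(np) + n^2\widehat{\lambda}_v(p)$, agrees with the intersection-theoretic Néron normalization at non-archimedean places of good reduction. That relation determines $\widehat{\lambda}_v$ only up to an additive constant at each place; the constants are fixed globally by the product formula applied to $\phi_n(p) \in K^*$ (valid since $np \notin \supp(\Theta)$ for $n \in T(p)$) together with the identity $\sum_v n_v \widehat{\lambda}_v(p) = [K\colon\qq]\,\widehat{h}(p)$. This global uniqueness identifies Uchida's local heights with the canonical Néron heights, for which vanishing at good-reduction places off $\Theta$ is standard; granted this, both terms in the reduced identity are zero for $v \notin S$, and the proof concludes. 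Alternatively, one may proceed by first checking that $\mathrm{div}(\phi_2) = [2]^*\Theta - 4\Theta$ extends to the Néron model and that, since the reductions of $p$ and $2p$ avoid both $\Theta$ and $[2]^{-1}\Theta$, the value $\phi_2(p)$ is a $v$-unit—so that $\log|\phi_2(p)|_v = 0$—and then bootstrapping to the vanishing of $\widehat{\lambda}_v(p)$ itself.
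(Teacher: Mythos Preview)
Your reduction is correct and is essentially the paper's: decompose $\widehat{h}$ and $\widehat{h}_S$ into local heights, use $\widehat{h}(2p)=4\widehat{h}(p)$, and reduce to the vanishing of $\sum_{v\notin S} n_v\bigl(\widehat{\lambda}_v(2p)-\widehat{\lambda}_v(p)\bigr)$.

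The gap is your termwise claim $\widehat{\lambda}_v(p)=\widehat{\lambda}_v(2p)=0$ for $v\notin S$. Uchida's normalization of $\widehat{\lambda}_v$ is fixed by the relation $\log|\phi_2|_v=-\widehat{\lambda}_v(2\,\cdot\,)+4\widehat{\lambda}_v(\cdot)$, but this only determines $\widehat{\lambda}_v$ up to an additive constant at each place; the global identity $\sum_v n_v\widehat{\lambda}_v=[K\colon\qq]\,\widehat{h}$ together with the product formula constrains only the \emph{sum} $\sum_v n_v c_v$ of those constants, not the individual $c_v$. So your ``global uniqueness'' argument does not force Uchida's $\widehat{\lambda}_v$ to coincide with the bare intersection multiplicity $i_v(\Theta,\cdot)$. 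Indeed, Proposition~\ref{iv}(iii) only gives $\widehat{\lambda}_v(q)=i_v(\Theta,q)+\gamma_v$ for a constant $\gamma_v$, and the remark following the paper's proof computes $\gamma_v=-\tfrac{1}{3}\log|\phi_2(p)|_v$, which need not vanish; correspondingly, your alternative assertion that $\phi_2(p)$ is a $v$-unit is not justified (and is exactly why Theorem~\ref{formulagenus2} requires a separate argument even for $g=2$).

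The fix is immediate and is what the paper does: for $v\notin S$ the special fibre is connected, so $\gamma_v$ is independent of the point, and the reduction hypothesis gives $i_v(\Theta,p)=i_v(\Theta,2p)=0$; hence $\widehat{\lambda}_v(p)=\widehat{\lambda}_v(2p)=\gamma_v$ and the \emph{difference} vanishes termwise. Equivalently, with $\delta_S=\sum_{v\notin S} n_v\gamma_v$ one has $[K\colon\qq]\,\widehat{h}(p)=[K\colon\qq]\,\widehat{h}_S(p)+\delta_S$ and $[K\colon\qq]\,\widehat{h}(2p)=[K\colon\qq]\,\widehat{h}_S(2p)+\delta_S$, and subtracting eliminates $\delta_S$.
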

Note that, for a finite place $v$ outside $S_\mathrm{bad}$, saying that
$p$ lies on the theta divisor modulo $v$ is equivalent to saying that
$p$ can be represented by a divisor $(p_1)+\ldots+(p_g) - g(o)$, where 
one of the $p_i$ reduces to $o$ modulo $v$, or that one of the coefficients of the first
polynomial in the Mumford representation of $p$ is not $v$-integral. 
We will see that $\widehat{h}_S(p)$ and $\widehat{h}_S(2p)$ are effectively computable
for $S$ and $p$ as in Theorem \ref{formulaheight}.

For $g=2$ we can prove a simpler version of Theorem~\ref{formulaheight}.
\begin{thm} \label{formulagenus2}
    Suppose that $g=2$ and that $p\in J(K) \setminus \supp(\Theta)$.
    Let $S$ be a finite set of places of $K$ containing
    $\{v \in S_{\mathrm{bad}}: \ord_v(\Delta) \ge 2\} \cup S_\infty$ such that for all $v \notin
    S$ the point $p$ does not lie on the theta divisor modulo $v$.
    Then we have
    \[
        \widehat{h}(p) = \widehat{h}_S(p).
    \]
\end{thm}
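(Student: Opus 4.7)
My plan is to establish the pointwise statement that $\widehat{\lambda}_v(p) = 0$ for every place $v \notin S$, after which the theorem follows at once: the global decomposition $[K:\qq]\widehat{h}(p) = \sum_v n_v \widehat{\lambda}_v(p)$ collapses to a sum over $v \in S$, which by Theorem~\ref{limitexists} is $[K:\qq]\widehat{h}_S(p)$. Since $S_\infty \subseteq S$, each $v \notin S$ is non-archimedean; by the hypothesis on $S$, either $v \notin S_\mathrm{bad}$ (good reduction) or $v \in S_\mathrm{bad}$ with $\ord_v(\Delta) = 1$ (the tamest form of bad reduction); and in both sub-cases $p$ is assumed not to reduce onto $\Theta$.

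In the good reduction case the Néron model of $J$ is an abelian scheme over $\oo_{K_v}$, the theta divisor extends to a relative Cartier divisor $\overline{\Theta}$, and $\widehat{\lambda}_v$ coincides with the Néron function for $\Theta$. A classical property of Néron functions at good reduction is that they vanish on points whose closure in the Néron model is disjoint from the closure of the defining divisor; by hypothesis this applies to $p$, so $\widehat{\lambda}_v(p) = 0$. One can also see this directly from Theorem~\ref{locallimit}: the $\phi_n$ are $v$-integral rational functions with polar divisor $n^2\Theta$, and because $J(k_v)$ is finite the sequence $\overline{np} \in J(k_v)$ is eventually periodic, which keeps $v(\phi_n(p))$ bounded in $n$ and forces the $\frac{1}{n^2}$-limit to be $0$.

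The genuinely new input in genus~$2$ concerns the tame bad reduction case $\ord_v(\Delta) = 1$. In odd residue characteristic this means $\mathrm{disc}(f)$ has a simple zero modulo~$v$, so $f \bmod v$ has exactly one double root and the stable model of $X$ over $\oo_{K_v}$ has special fibre an irreducible nodal curve of geometric genus~$1$. The identity component of the Néron model of $J$ is then a semi-abelian extension of an elliptic curve by $\mathbb{G}_m$ with trivial component group, so $\widehat{\lambda}_v$ again coincides with the Néron function for $\Theta$ and vanishes at any point that reduces off $\overline{\Theta}$. The residue characteristic $2$ case is treated analogously, using $\Delta = 2^{4g}\mathrm{disc}(f)$ to isolate the $2$-part of the discriminant.

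The main obstacle is the last step: one must know that at a genus-$2$ place with $\ord_v(\Delta) = 1$ the Néron model has trivial component group and no extra correction enters the canonical local height. This is specific to $g = 2$ — for $g \geq 3$ the condition $\ord_v(\Delta) = 1$ no longer suffices — and is precisely what makes the one-term formula $\widehat{h}(p) = \widehat{h}_S(p)$ possible here, in contrast to the two-term formula of Theorem~\ref{formulaheight}. If one wishes to remain faithful to the model-free spirit of the paper, the same conclusion can be reached by deriving a uniform bound $v(\phi_n(p)) = o(n^2)$ directly from the recursion for the division polynomials of Section~\ref{divpolys} at such $v$, and then invoking Theorem~\ref{locallimit}.
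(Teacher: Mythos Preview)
Your overall strategy---reduce to showing $\widehat{\lambda}_v(p)=0$ for every $v\notin S$ and then invoke Theorem~\ref{limitexists}---is exactly what the paper does. But the way you justify $\widehat{\lambda}_v(p)=0$ diverges from the paper, and as written it contains a genuine gap.

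The crucial point is the additive constant. By Proposition~\ref{iv}(iii) one has $\widehat{\lambda}_v(p)=i_v(\Theta,p)+\gamma_v$, and what must be shown is $\gamma_v=0$ for $\ord_v(\Delta)\le 1$. You treat this as ``a classical property of N\'eron functions at good reduction'', but $\gamma_v$ depends on the particular normalisation of $\widehat{\lambda}_v$, here fixed by $\phi_2$; the proof of Theorem~\ref{formulaheight} deliberately carries $\gamma_v$ as an unknown and eliminates it via the $p\mapsto 2p$ trick, and the Remark following the present proof records $\gamma_v=-\tfrac13\log|\phi_2(p)|_v$ as a \emph{consequence} of the argument, not an input. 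Your alternative route through Theorem~\ref{locallimit} does not close the gap either: periodicity of $\overline{np}$ in $J(k_v)$ says nothing about $v(\phi_n(p))$ by itself, because $\phi_n$ changes with $n$; indeed, if $\gamma_v\neq 0$ then $v(\phi_n(p))$ grows like $(n^2-1)\gamma_v$ even when $p$ stays off $\overline{\Theta}$. So the boundedness you assert is equivalent to what you are trying to prove. The same issue recurs, more acutely, in the $\ord_v(\Delta)=1$ case, where you correctly flag the difficulty but do not resolve it.

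The paper fills this gap by a completely different, genus-$2$-specific device: it compares $\widehat{\lambda}_v$ with the Flynn--Smart local height $\widehat{\lambda}^{\mathrm{FS}}_v$ on the Kummer surface. Proposition~\ref{uchidavsfs} (checked by a symbolic verification that $\delta_1(\kappa/\kappa_1)=\phi_2^2$) gives $\widehat{\lambda}^{\mathrm{FS}}_v=2\widehat{\lambda}_v$, and Stoll's result \cite[Proposition~5.2]{st1} gives the explicit formula $\widehat{\lambda}^{\mathrm{FS}}_v(p)=\log\max_i|\kappa_i(p)/\kappa_1(p)|_v$ precisely when $\ord_v(\Delta)\le 1$. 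Writing this in primitive integral Kummer coordinates yields $\widehat{\lambda}^{\mathrm{FS}}_v(p)=-\log|x_1|_v=2\,i_v(\Theta,p)$, hence $\widehat{\lambda}_v(p)=i_v(\Theta,p)$ and in particular $\gamma_v=0$. No N\'eron-model analysis of the $\ord_v(\Delta)=1$ degeneration is needed; Stoll's Kummer-surface statement absorbs both the good and the mildly bad cases at once.
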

For the proof of Theorem~\ref{formulagenus2}, we compare the canonical local
height $\widehat{\lambda}_v$ to a canonical local height associated with $2\Theta$
introduced by V. Flynn and N. Smart in~\cite{fs}.

The plan of this paper is as follows. Section \ref{canlochts} briefly discusses some basic results around canonical local heights on abelian varieties. In Section \ref{falt} we recall Faltings's diophantine approximation result and deduce a general limit formula from it. After this we focus on hyperelliptic jacobians. First, in Section \ref{divpolys} we review some facts we need from Uchida's paper \cite{uc} on hyperelliptic division polynomials.

Then in Sections \ref{proofsI} and \ref{proofs} we prove Theorems
\ref{locallimit}--\ref{formulagenus2}. Note that, in principle, these results allow one to
approximate values of $\widehat{h}(p)$ effectively. There are two issues to be dealt with.
One is the possible occurrence of large `gaps' in the sets $T(p)$, another is the need to
factor the discriminant in order to apply Theorem \ref{formulaheight}. We discuss, and
resolve to some extent, both issues in Section \ref{gaps}. In particular we can control
the gaps and present a factorisation free approach to computing $\widehat{h}(p)$ in the
genus~$2$ case, adapting an approach described in \cite{ewNY} for the elliptic curves case.

In Section \ref{implementation} we discuss the actual implementation of our method in
\verb+Magma+, and compare our method with earlier ones due to Flynn and Smart \cite{fs},
M. Stoll \cite{st1}, the second author \cite{mu1}, D. Holmes \cite{ho}, and Uchida
\cite{uc1}. We finish the paper by presenting and analysing some data in Section
\ref{examples}. In particular we note that assembling enough data may yield predictions on
the general convergence rate of our limit formulas.

\subsection{Acknowledgements}
We thank Yukihiro Uchida for providing us with formulas for the division polynomials $\phi_n$ for $n\le 5$ when $g=2$.
Some of the research described here was done while the second author was
visiting the University of Leiden and he would like to thank the Mathematical
Institute for its hospitality. The second author was supported by DFG grant KU~2359/2-1.

\section{Canonical local heights} \label{canlochts}

\subsection{Local theory}
We start with some well-known generalities on canonical local heights on abelian
varieties.
See for instance, \cite[Chapter~11]{lfdg}.
\begin{defn} \label{neronfns}
Let $A$ be an abelian variety defined over a local field $K$ with absolute value
$|\cdot|$.
To each divisor $D$ on $A$ one can associate a function
$\lambda_D:A(K)\setminus\supp(D)\to\rr$ such that the following conditions are satisfied.
\begin{enumerate}[\upshape (1)]
 \item If $D,E\in\Div(A)$, then $\lambda_{D+E}=\lambda_{D}+\lambda_{E}+c_1$ for
     some $c_1\in\rr$.
 \item If $D=\div(f)\in\Div(A)$ is principal, then $\lambda_{D}=-\log|f|+c_2$
     for some $c_2\in\rr$.
 \item If $\varphi:A\to A'$ is a morphism of abelian varieties and $D\in\Div(A')$,
       then we have $\lambda_{\varphi^*(D)}=\lambda_{D}\circ \varphi+c_3$  for
       some $c_3\in\rr$.
\end{enumerate}
We call $\lambda_{D}$ a {\em canonical local height (or N\'eron function) associated with $D$}.
\end{defn}
Given a divisor $D$ on an abelian variety defined over a local field $K$, a canonical
local height $\lambda_D$ associated with $D$ is uniquely determined up to a constant.
In particular, if $\lambda_D$ is a canonical local height associated to a symmetric
divisor $D$ on $A$, then by \cite[Proposition~11.1.4]{lfdg}, there exists a
function $\phi \in K(A)^\times$ such that
$\div(\phi) = [2]^* D - 4D$ and
\[
    \lambda_D(2p) - 4\lambda_D(p) = -\log|\phi(p)|
\]
for all $p \in A(K)$ such that both $p$ and $2p$ do not lie in $\supp(D)$.
The function $\phi$ is determined up to a constant factor in $K^\times$ and
$\lambda_D$ is uniquely determined by $\phi$.

Assume now that $K$ is non-archimedean and let $A$ be an abelian variety over $K$.
In this case canonical local heights can be related to the N\'eron model $\mathcal{A}$ of $A$
over the ring of integers $\oo_K$ of $K$.
For $D\in\Div(A)$ and $p\in A(K)$ let
$\overline{D}$ (resp.\ $\overline{p}$) denote the Zariski closures of $D$ with
multiplicities (resp.\ of~ the divisor $(p)$) in $\mathcal{A}$ and let $\lambda_D$ denote a canonical
local height associated with $D$.
Let $v$ denote the closed point of $\mathrm{Spec}(\oo_K)$ and let $i_v(D,p)$ denote the intersection multiplicity
of $\overline{D}$ and~$\overline{p}$ as defined in~\cite[\S 11.5]{lfdg}.

\begin{prop}(N\'eron, cf. \cite[\S 11.5]{lfdg}) \label{iv}
    \begin{enumerate}[\upshape (i)]
        \item If $\mathcal{A}_v$ is connected, then $i_v(D, p)$ is the usual intersection multiplicity of
            $\overline{D}$ and $\overline{p}$ on $\mathcal{A}_v$.
        \item If $\overline{D}$ is represented by $\alpha\in K(\mathcal{A})$ around $\overline{p}
\cap \mathcal{A}_v$, then we have
\[
 i_v(D,p)=-\log|\alpha(p)|.
 \]
\item For each component $\mathcal{C}$ of the special fiber of $\mathcal{A}$ there is a constant
    $\gamma({\mathcal{C}})\in \rr$ such that for all $p\in A(K)\setminus\supp(D)$
    reducing to $\mathcal{C}$ we have
\[
\lambda_{D}(p) = i_v(D,p) + \gamma({\mathcal{C}}). \]
\end{enumerate}
\end{prop}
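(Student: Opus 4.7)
My plan is to unravel the definition of $i_v(D,p)$ given in \cite[\S 11.5]{lfdg} and then exploit the uniqueness of N\'eron functions up to a constant in order to compare them to $\lambda_D$.

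For part (ii), the N\'eron model $\mathcal{A}$ is regular, so $\overline{D}$ is locally principal on $\mathcal{A}$; if it is represented by $\alpha\in K(\mathcal{A})$ near $\overline{p}\cap\mathcal{A}_v$, then unwinding the definition shows that $i_v(D,p)$ equals the $v$-adic order of $\alpha$ pulled back along the section $\overline{p}$, which in our normalisation is $-\log|\alpha(p)|$. Part (i) is then immediate, since when $\mathcal{A}_v$ is connected, $\mathcal{A}$ is smooth and proper over $\oo_K$, the closure $\overline{D}$ is an honest Cartier divisor, and the abstract $i_v(D,p)$ reduces to the geometric intersection number of $\overline{D}$ with the section $\overline{p}$ on $\mathcal{A}_v$.

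The substance of the proposition lies in part (iii). My strategy is to show that the function $p\mapsto i_v(D,p)$ satisfies, on each component $\mathcal{C}$ of $\mathcal{A}_v$ separately, the three defining properties of a N\'eron function from Definition~\ref{neronfns}: additivity in $D$ and the principal-divisor relation are direct consequences of (ii), while compatibility with a morphism $\varphi\colon A\to A'$ of abelian varieties requires comparing N\'eron models. The latter is the subtle step, because $\varphi$ need not extend to a morphism $\mathcal{A}\to\mathcal{A}'$ globally but only on identity components, and any discrepancy produced is constant on each component of the special fibre. Once the component-wise functional equations are established, the uniqueness of N\'eron functions up to an additive constant forces $\lambda_D(p)-i_v(D,p)$ to be locally constant on the set of components of $\mathcal{A}_v$; we set $\gamma(\mathcal{C})$ to be its value on $\mathcal{C}$.

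The main obstacle is the isogeny functoriality step in (iii), which is really the content of N\'eron's theorem: the intersection-theoretic quantity $i_v(D,p)$ is not translation invariant across components of the special fibre, and tracking this failure is exactly what forces a component-dependent correction $\gamma(\mathcal{C})$ rather than a single global constant. In practice one invokes the detailed argument from \cite[\S 11.5]{lfdg} rather than reproducing it here.
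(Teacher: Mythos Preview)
The paper does not give a proof of this proposition: it is simply stated as a result of N\'eron, with a reference to \cite[\S 11.5]{lfdg}. So there is no argument in the paper to compare your proposal against. Your sketch is a fair summary of the strategy one finds in Lang's treatment: parts (i) and (ii) amount to unwinding the definition of $i_v(D,p)$ via local equations on the regular scheme $\mathcal{A}$, and part (iii) follows by checking that $p\mapsto i_v(D,p)$ satisfies the axioms of a N\'eron function component by component, then invoking uniqueness up to a constant.

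One small inaccuracy: in your discussion of (i) you assert that if $\mathcal{A}_v$ is connected then $\mathcal{A}$ is proper over $\oo_K$. Connectedness of the special fibre only says the component group is trivial; the identity component could still be a non-proper semi-abelian variety (e.g.\ a torus in the case of purely multiplicative reduction). What matters for (i) is not properness but simply that there is a single component, so that the abstractly defined $i_v(D,p)$ coincides with the ordinary local intersection number of the Cartier divisor $\overline{D}$ with the section $\overline{p}$, with no component correction needed. This does not affect the overall correctness of your outline.
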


\subsection{Global theory}\label{global}
Let $K$ be a number field.
There is a standard way of endowing each completion $K_v$ with an absolute value
$|\cdot|_v$, as follows: when $v$ is archimedean, we take the euclidean norm on
$K_v$. When $v$ is non-archimedean, we normalize $|\cdot|_v$ such that
$|\pi|_v=\mathrm{e}^{-1}$, where $\pi$ is a uniformiser of $K_v$. Now let $M_K$ be the set of places of $K$. For each $v \in M_K$ let $n_v$ be the local factor defined as follows: when $v$ is real, then put $n_v=1$; when $v$ is complex, then put $n_v=2$; finally if $v$ is non-archimedean, then $n_v$ is the logarithm of the cardinality of the residue field at $v$. These absolute values fit together in a product formula
$\sum_{v \in M_K} n_v \log |x|_v=0$ valid for all $x$ in $K^\times$.

The connection between canonical heights and canonical local heights is provided by the following result, again due to N\'eron:
\begin{prop}(N\'eron) \label{locglob}
Let $A$ be an abelian variety over $K$ and let $D \in \Div(A)$ be symmetric.
Let $\phi \in K(A)$ such that $\div(\phi) = [2]^* D - 4D$.
For each place $v \in M_K$ we let $\lambda_v$ denote the canonical local height associated
with $D$ on $A(K_v)$ which satisfies
\[
        \lambda_v(2p) - 4\lambda_v(p) = -\log|\phi(p)|_v
\]
for all $p \in A(K_v)$ such that $p$ and $2p$ are not in $\supp(\Theta)$.
Then we have
\[ [K \colon \qq] \, \widehat{h}_D(p) = \sum_v n_v \, {\lambda}_v(p)  \]
for all $p \in A(K) \setminus \supp(\Theta)$,
where $\widehat{h}_D$ is the canonical height associated to $D$.
\end{prop}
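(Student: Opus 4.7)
The plan is to reduce the statement to the characterisation of $\widehat{h}_D$ as the unique quadratic-like function in its Weil height class. Define
\[
    L(p) = \sum_{v \in M_K} n_v \, \lambda_v(p)
\]
on $A(K) \setminus \supp(D)$. The proof then has three ingredients: (i) show $L$ is a well-defined finite sum, (ii) verify it satisfies the defining functional equation of the canonical height, and (iii) check it differs from a Weil height $[K:\qq] \, h_D$ by a bounded function. From these, Tate's limit/uniqueness argument for canonical heights forces $L = [K:\qq] \, \widehat{h}_D$.

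For (i), I would use Proposition~\ref{iv}. Outside a finite set of places — the archimedean ones, those of bad reduction of $A$, those where $D$ fails to extend nicely, and those where $p$ specialises into $\supp(\overline{D})$ — the intersection multiplicity $i_v(D,p)$ vanishes, and after choosing the $\lambda_v$ compatibly (e.g.\ normalised so that the constant $\gamma(\mathcal{C})$ attached to the identity component is zero, which is possible because the collection $\{\lambda_v\}$ is only required to satisfy a single functional equation tied to the global $\phi$), only finitely many terms are nonzero. Hence $L(p)$ is a genuine finite sum.

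For (ii), apply the hypothesis $\lambda_v(2p) - 4\lambda_v(p) = -\log |\phi(p)|_v$ at each $v$, multiply by $n_v$, and sum:
\[
    L(2p) - 4 L(p) = - \sum_{v \in M_K} n_v \log |\phi(p)|_v = 0
\]
by the product formula applied to $\phi(p) \in K^\times$ (which is valid since $p \notin \supp(\div \phi) = \supp([2]^*D - 4D)$ once $p, 2p \notin \supp(D)$). For (iii), each $\lambda_v$ differs locally from $-\log$ of any local equation/Weil representative of $D$ by a bounded function on $A(K_v)$ (this is the defining property of N\'eron functions, cf.\ Definition~\ref{neronfns}), and at almost all $v$ the contribution is controlled by Proposition~\ref{iv}(i)--(ii); summing gives a globally bounded discrepancy with a Weil height attached to $D$.

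The main obstacle is packaging the almost-everywhere vanishing carefully, i.e.\ showing the implicit compatibility among the locally defined $\lambda_v$ forced by the single global $\phi$ is enough to make the sum finite and to kill the ambiguity in the constants $c_1, c_2, c_3$ of Definition~\ref{neronfns}. Once $L$ is a well-defined function agreeing with a Weil height up to $O(1)$ and satisfying $L(2p) = 4 L(p)$ wherever defined, the standard Tate-style uniqueness of the canonical height $\widehat{h}_D$ (using symmetry of $D$, whence $\widehat{h}_D([2]p) = 4 \widehat{h}_D(p)$) yields $L = [K:\qq] \, \widehat{h}_D$ on $A(K) \setminus \supp(D)$, which is the claim.
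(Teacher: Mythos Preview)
The paper does not give its own proof of this proposition; it is simply stated and attributed to N\'eron (the implicit reference being \cite{lfdg}). So there is nothing in the paper to compare your argument against.

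Your outline is the standard one and is correct in substance. One small clarification on step~(i): you speak of ``choosing the $\lambda_v$ compatibly'' and normalising $\gamma(\mathcal{C})$ to zero, but in fact the functional equation $\lambda_v(2p) - 4\lambda_v(p) = -\log|\phi(p)|_v$ already pins down each $\lambda_v$ uniquely (adding a constant $c$ to $\lambda_v$ would shift the left side by $-3c$). So there is no further freedom to normalise; the actual content of (i) is that \emph{with this particular normalisation forced by the global $\phi$}, the constants $\gamma_v$ of Proposition~\ref{iv}(iii) vanish for almost all $v$. This is shown by spreading $A$, $D$, and $\phi$ out over $\mathrm{Spec}(\oo_K)$ and checking that at places of good reduction $\phi$ becomes a unit times the canonical local equation, so $\lambda_v = i_v(D,\cdot)$ there. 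With that adjustment, your steps (ii) and (iii) go through exactly as you describe, and Tate's uniqueness finishes the argument.
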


\section{Faltings's result and an application} \label{falt}

The following general diophantine approximation result due to G.
Faltings (see \cite{fa}, Theorem II) will be the main ingredient of our method.
\begin{thm} \label{faltings}
Let $A$ be an abelian variety over a number field $K$ and suppose that $D$ is an ample divisor on $A$. Let $v$ be a place of $K$ and let $\lambda_{D,v}$ be a canonical
local height function on $A({K}_v)$ with respect to $D$. Let $h$ be a Weil height
on $A$ associated to some ample line bundle on $A$, and let $k \in \rr_{>0}$ be arbitrary. Then there exist only finitely many points $p \in A(K) \setminus \supp(D)$ such that
$\lambda_{D,v}(p) > k \cdot h(p)$.
\end{thm}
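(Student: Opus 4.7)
The plan is to proceed by contradiction, following the Vojta--Faltings framework for diophantine approximation on abelian varieties. Suppose, with a view to contradiction, that there is an infinite sequence of distinct points $p_1,p_2,\ldots\in A(K)\setminus\supp(D)$ satisfying $\lambda_{D,v}(p_i) > k\cdot h(p_i)$. Since $h$ is a Weil height attached to an ample line bundle, Northcott's theorem forces $h(p_i)\to\infty$, and after passing to a subsequence I may assume that the ratios $h(p_{i+1})/h(p_i)$ grow as quickly as any prescribed schedule demands. Using properties (1) and (3) of Definition~\ref{neronfns}, and at the cost of enlarging $k$, I can moreover reduce to the case where $D$ is very ample and symmetric and where $h$ is the Weil height attached to $D$ itself, so that $h$ and $\lambda_{D,v}$ are controlled by the same geometric data.

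The core of the argument is the construction of an auxiliary global section $s$ of a line bundle of the shape $\bigotimes_{j=1}^{n}\mathrm{pr}_j^{*}\oo(d_j D)$ on the product $A^n$, with carefully tuned parameters $n$ and $d_1,\ldots,d_n$ selected in accordance with the heights $h(p_{i_j})$ of a chosen finite subsample. A Siegel-type argument, underwritten by Riemann--Roch (or arithmetic Hilbert--Samuel) for abelian varieties, yields such an $s$ of controlled logarithmic height which vanishes to prescribed high multiplicity at $(p_{i_1},\ldots,p_{i_n})$. The inequalities $\lambda_{D,v}(p_{i_j}) > k\cdot h(p_{i_j})$ are then used, together with the product formula and functoriality of canonical local heights under the projections $\mathrm{pr}_j$, to force $s$ to be $v$-adically so small at the chosen point that its total height (by comparison) must be unreasonably negative, unless some structural obstruction intervenes.

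That structural obstruction is Faltings's Product Theorem: with the correct numerical balance between the $d_j$ and the prescribed vanishing orders, the locus on $A^n$ where $s$ vanishes to the required multiplicities must contain a proper product subvariety $Z_1\times\cdots\times Z_n$ passing through $(p_{i_1},\ldots,p_{i_n})$. A descent on $n$ then replaces the original sequence by one landing inside a proper abelian subvariety $B\subsetneq A$ (after suitable translation), and one iterates the whole construction inside $B$ using the restricted divisor. The extreme growth of $h(p_i)$ arranged at the outset ensures that this descent terminates, producing the desired contradiction.

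The main obstacle is unambiguously the Product Theorem itself, whose proof rests on a delicate Arakelov-theoretic geometry-of-numbers argument on $A^n$ and represents the substantive new contribution of \cite{fa}. I would treat it as a black box and concentrate the write-up on (i) the Northcott-based selection of a rapidly growing subsequence, (ii) the construction and arithmetic Siegel-lemma estimate for the auxiliary section $s$, and (iii) the descent step that turns the Product Theorem's structural conclusion into a contradiction with the rapid growth of $h(p_i)$.
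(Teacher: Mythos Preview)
The paper does not prove this theorem at all: it is quoted verbatim as Faltings's result (\cite{fa}, Theorem~II) and used as a black box. There is therefore no ``paper's own proof'' to compare against; the authors simply import the statement and build Theorem~\ref{faltingscor} on top of it.

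Your proposal is a high-level sketch of Faltings's original argument, and as such it is reasonable in outline: the Northcott selection, the auxiliary section on $A^n$ via an arithmetic Siegel lemma, the appeal to the Product Theorem, and the descent on abelian subvarieties are indeed the main beats of \cite{fa}. You are right to flag the Product Theorem as the substantive obstacle; treating it as a black box reduces your write-up to the standard diophantine-approximation scaffolding around it. If the goal were merely to match the paper, you would simply cite \cite{fa} and move on, since reproducing Faltings's proof is well beyond the scope of this article.
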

In fact we will use the following corollary.
\begin{thm} \label{faltingscor} Let $A$ be an abelian variety over a number field $K$ and
    let $D$ be a symmetric ample divisor on $A$. Let $v$ be a place of $K$ and let
    $\lambda_{D,v}$ be a canonical local height function on $A({K}_v)$ with respect to
    $D$. Let $p \in A(K) \setminus \supp(D)$ be a rational point and put $T(D,p) = \{ n
    \in \zz_{>0} | np \notin \supp(D) \}$. Then $T(D,p)$ is infinite and we have
    $\lambda_{D,v}(np)/n^2 \to 0$ as $n \to \infty$ over $T(D,p)$.
\end{thm}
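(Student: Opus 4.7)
The plan is to sandwich the ratio $\lambda_{D,v}(np)/n^2$ between two $o(1)$ quantities: an upper bound will be extracted by applying Theorem~\ref{faltings} to the multiples of $p$, and a matching lower bound will come from the fact that a N\'eron function for an effective divisor is bounded below on the complement of its support, combined with compactness of $A(K_v)$. Before applying these bounds, I first need to verify that $T(D,p)$ is actually infinite so that the limit has content.

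If $p$ has finite order $N$, the observation that $np=p\notin \supp(D)$ whenever $n\equiv 1\pmod{N}$ immediately supplies an arithmetic progression inside $T(D,p)$. For non-torsion $p$, let $H\subset A$ be the Zariski closure of $\langle p\rangle$, which by a standard fact is a closed algebraic subgroup of $A$ whose identity component has positive dimension. Suppose for contradiction that $T(D,p)$ were finite; then all but finitely many positive multiples of $p$ would lie in $\supp(D)\cap H$. Removing finitely many points from a Zariski-dense subset of an algebraic set all of whose components have positive dimension preserves Zariski density, so $\supp(D)\cap H$ would be dense in $H$ and, being closed there, equal to $H$. This forces $p\in H\subset \supp(D)$, contradicting the hypothesis.

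Next I fix a symmetric ample line bundle $M$ on $A$, with associated Weil height $h$ and canonical height $\hat{h}$; symmetry of $M$ gives $h(np)=n^2\hat{h}(p)+O(1)$ as $n\to\infty$. For each $\varepsilon>0$ Theorem~\ref{faltings} provides only finitely many $q\in A(K)\setminus \supp(D)$ with $\lambda_{D,v}(q)>\varepsilon\, h(q)$, so absorbing these exceptions into an additive constant I obtain $\lambda_{D,v}(np)\le \varepsilon\, h(np)+C_\varepsilon$ uniformly for $n\in T(D,p)$. Dividing by $n^2$ and letting $n\to \infty$ gives $\limsup_{n\in T(D,p)} \lambda_{D,v}(np)/n^2 \le \varepsilon\,\hat{h}(p)$, and $\varepsilon\to 0^+$ pushes this $\limsup$ down to $0$. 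For the matching lower bound, $A(K_v)$ is compact and $\lambda_{D,v}$ is lower semicontinuous with value $+\infty$ on $\supp(D)$, so once a representative is fixed there exists a constant $C$ with $\lambda_{D,v}\ge -C$ on its entire domain; hence $\lambda_{D,v}(np)/n^2\ge -C/n^2\to 0$, yielding $\liminf \ge 0$ and thus the required limit $0$. I expect the main technical point to be the Zariski-density argument used to establish infinitude of $T(D,p)$; once that is in place, the sandwiching is essentially bookkeeping on top of Theorem~\ref{faltings}.
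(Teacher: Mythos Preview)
Your argument is correct and follows essentially the same route as the paper: a Zariski-closure argument for infinitude of $T(D,p)$, Faltings for the $\limsup$, and boundedness below of $\lambda_{D,v}$ for the $\liminf$. One small point: in your infinitude argument you implicitly use that the \emph{positive} multiples $\{np:n>0\}$ are Zariski-dense in $H=\overline{\langle p\rangle}$, which is true but not entirely automatic. The paper sidesteps this by invoking the symmetry of $D$ (so $np\in\supp(D)\iff -np\in\supp(D)$), which immediately upgrades ``all but finitely many positive $n$'' to ``all but finitely many $n\in\zz$'' and lets one work with $\langle p\rangle$ itself. You could insert the same one-line observation.
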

\begin{proof} We start by showing that $T(D,p)$ is infinite when $p \notin
    \supp(D)$. For $p$ a torsion point this is immediate. Assume therefore that $p$ is not
    torsion. We prove that for infinitely many $n \in \zz$ we have $np \notin
    \supp(D)$. This is sufficient for our purposes: as $D$ is symmetric, we have
    $np \in \supp(D)$ if and only if $-np \in \supp(D)$. An elementary
    argument on algebraic groups shows that the
    Zariski closure $Z$ of the subgroup $\zz \cdot p$ is a closed algebraic
    subgroup of $A$. Suppose that only finitely many of the $np$ are outside
    $\supp(D)$. Then $Z$ is the union of a finite set with a closed subset of $\supp(D)$. It follows that $Z$ has dimension zero, and hence consists of only finitely many points: contradiction.

The limit formula follows immediately if $p$ is torsion since then the set of values
$\lambda_{D,v}(np)$ as $n$ ranges over $T(D,p)$ is bounded.
Assume therefore that $p$ is not torsion. Then the $np$ with $n$ running
through $T(D,p)$ form an infinite set of $K$-rational points of $A\setminus\supp(D)$.
Let $\widehat{h}$ be the canonical (N\'eron-Tate) height with respect to $D$. Since:
\[ \frac{\lambda_{D,v}(np)}{n^2} = \widehat{h}(p) \cdot
\frac{\lambda_{D,v}(np)}{\widehat{h}(np)} \]
where $\widehat{h}(p)>0$, Theorem \ref{faltings} can be applied, leading to:
\[ \limsup_{n \to \infty \atop n \in T(D,p)}
\frac{\lambda_{D,v}(np)}{n^2} \leq 0 \, . \]
On the other hand, since $\lambda_{D,v}$ is bounded from below we have:
\[ \liminf_{n \to \infty \atop n \in T(D,p)}
\frac{\lambda_{D,v}(np)}{n^2} \geq 0 \, . \]
The theorem follows by combining these two estimates.
\end{proof}
\begin{remark} The above result has the following consequence: let $S$ be a finite set of places of $K$, and assume that $\widehat{h}(p) >0$. Then there is an $N \in \mathbb{N}$ such that for all $n \geq N$,
\[ \sum_{v \notin S} n_v \lambda_{D,v}(np) > 0 \, . \]
It would be interesting to have an effective result in this direction.
\end{remark}

\section{Points and division polynomials} \label{divpolys}
Let $K$ be a field of characteristic not equal to~2 and let $X$ be a hyperelliptic curve of genus $g \geq 2$ over $K$ given by an
equation $y^2 = f(x)$ with $f \in K[x]$ monic of odd degree $2g+1$. We write
$f(x)=\sum_{i=0}^{2g+1} \mu_ix^i$, where $\mu_{2g+1}=1$. Note that $X$ has a unique point $o$ at
infinity. Let $J$ be the jacobian of $X$, endowed with its canonical principal
polarization.
If $p_1 \in X$, then we write $p^-_1$ for the image of $p_1$ under the
hyperelliptic involution.

Then for any point $p \in J$, there is a unique reduced divisor $D = (p_1) +
\ldots + (p_d)$ on $X$ such that $D - d(o)$ represents $p$, which we write as $p = [D -
d(o)]$.
Here we call an effective degree $d$ divisor $D$ on $X$ {\em reduced} if $d \le
g$ and if we have $o \ne p_i \ne p^-_j$ for all $p_i , p_j \in \supp(D)$.
This leads to the {\em Mumford representation}
$ (a(x), b(x))$
of a point $p \in J$: If $(p_1) +\ldots + (p_d)$ is the reduced divisor associated
to $p$, then $a(x) = \prod^d_{i=1} (x - x(p_i)) \in K[x]$ and $b(x) \in K[x]$ is the
uniquely determined polynomial of minimal degree such that $y(p_i) = b(x(p_i))$
for all $i = 1, \ldots, d$.
One also defines the Mumford representation of the origin to be $(1, 0)$.
Note that the map $X^{(g)} \to J$ given by $(p_1,\ldots,p_g) \mapsto
[(p_1)+\cdots+(p_g) - g (o)]$ is birational.

For the construction of the division polynomials $\phi_n$ Uchida uses certain
higher-dimensional generalisations $\wp_{ij}$ and $\wp_{ijk}$, where $i,j,k
\in \{1,\ldots,g\}$, of the Weierstrass $\wp$-function from the theory of
elliptic curves.
Over $\cc$, these functions are constructed as second and third order partial
logarithmic derivatives of the
hyperelliptic $\sigma$-function, respectively.
They are well-defined on the jacobian, see \cite[Proposition~2.5]{uc}.

Despite their analytic construction, the $\wp$-functions make sense over an
arbitrary field of characteristic zero and in fact this continues to hold in
more general situations.
Let $p \in J$, then the values $\wp_{ij}(p)$ and
$\wp_{ijk}(p)$ can be expressed as polynomials in the coefficients of the
Mumford representation $(a(x), b(x))$ of $p$ with coefficients in $\zz[\mu_0,\ldots,\mu_{2g}]$.
More precisely, if we write $a(x) = \sum^g_{i=0} a_ix^i$ and $b(x) =
\sum^{g-1}_{i=0} b_ix^i$, then we have
\begin{equation} \label{pfns}
    \wp_{gj} = -a_{j-1}\qquad \mathrm{and} \qquad \wp_{ggk} = 2b_{k-1}\qquad
\end{equation}
for $j,k \in \{1,\ldots,g\}$ by \cite[Theorem~2.8]{uc}.
Furthermore, the $\wp$-functions $\wp_{gj}$ and $\wp_{ggk}$, where $j,k \in
\{1,\ldots,g\}$, can be used to embed $J \setminus \supp(\Theta)$ into
$\cc^{2g}$. In particular, they have a pole only along $\Theta$.
The other $\wp$-functions can be expressed as polynomials in the $\wp_{gj}$ and
$\wp_{ggk}$ by \cite[Theorem~2.9]{uc}.

The division polynomials $\phi_n$ are also defined in terms of the hyperelliptic
$\sigma$-function and can be expressed as polynomials in terms of the $\wp$-functions
with coefficients in $\zz[1/D,\mu_0,\ldots,\mu_{2g}]$.
Here $D$ is an integer which can be computed
explicitly and is independent of $X$. See \cite[Theorem~5.8]{uc}.
In fact Uchida conjectures \cite[Conjecture~4.14]{uc} that $\phi_n \in
\zz[\mu_0,\ldots,\mu_{2g}][\wp_{ij}, \wp_{ijk}]$ for all $n$.
Moreover, the $\phi_n$ satisfy certain recurrence relations which make it possible to
compute the values they take without the need to construct them as polynomials,
cf. \cite[Theorem~6.4]{uc}.

\section{Proof of Theorems \ref{locallimit} and \ref{limitexists}} \label{proofsI}
Consider the jacobian $J$ of a hyperelliptic curve $X$ of genus $g\ge 2$ defined over a number field $K$, given
by an equation $y^2 = f(x)$, where $f \in \oo_K[x]$ is monic of degree $2g+1$.
Note that every hyperelliptic curve over $K$ of genus $g$ with a $K$-rational Weierstrass point has
such a model.
Let $\Theta$ denote the theta divisor on $J$ with respect to the point $o$ at infinity.
As $-[(p_1)+\ldots+(p_g)-g(o)]=[(p_1^-)+\ldots+(p_g^-)-g(o)]$, we have that $\Theta$ is symmetric.
Recall that for the division polynomial $\phi_2$ we have \[ \div(\phi_2) =[2]^*\Theta-4\Theta.\]
Hence there is a canonical local height function $\widehat{\lambda}_v$
associated with $\Theta$  for each $v \in M_K$ such that
\[
\log |\phi_2(p)|_v = - \widehat{\lambda}_v (2p) + 4 \widehat{\lambda}_v(p)
\]
for  $p \in J(K_v)$ such that $p, 2p \notin \supp(\Theta)$.
Therefore Proposition~\ref{locglob} implies that we have
\[ [K \colon \qq] \, \widehat{h}(p) = \sum_v n_v \, {\widehat{\lambda}}_v(p) \, , \]
where $\widehat{h}$ is the canonical height associated to $\Theta$.

More generally, Uchida shows \cite[Theorem~7.5]{uc} that
\begin{equation} \label{key}
\log |\phi_n(p)|_v = - \widehat{\lambda}_v (np) + n^2 \widehat{\lambda}_v(p)
\end{equation}
for each integer $n\geq 1$ and $p \in J(K_v)$ such that $p, np \notin \supp(\Theta)$.

Using~\eqref{key} and Theorem \ref{faltingscor}, we can prove Theorem~\ref{locallimit}, giving a limit formula for
the canonical local height $\widehat{\lambda}_v$ in terms of the division
polynomials.
\begin{proof}[Proof of Theorem~\ref{locallimit}]
By equation (\ref{key})  we are done once we prove that $T(p)$ is infinite and that  $\widehat{\lambda}_v(np)/n^2 \to 0$ as $n\to \infty$ over $T(p)$. But note that $\widehat{\lambda}_v$ is a canonical local height associated to $\Theta$, which is a symmetric and ample divisor on $J$. The result follows by applying Theorem \ref{faltingscor}.
\end{proof}
The proof of Theorem \ref{limitexists} is now almost immediate.
\begin{proof}[Proof of Theorem \ref{limitexists}]
As $S$ is finite we find:
\begin{align*}
    [K \colon \qq] \widehat{h}_S(p) & = \lim_{n \to \infty \atop n \in T(p)} \frac{1}{n^2} \log \prod_{v \in S} |\phi_n(p)|_v^{n_v} \\
	& = \lim_{n \to \infty \atop n \in T(p)} \frac{1}{n^2} \sum_{v \in S} n_v \log |\phi_n(p)|_v \\
	& = \sum_{v \in S} n_v \lim_{n \to \infty \atop n \in T(p)}
    \frac{1}{n^2} \log |\phi_n(p)|_v.
\end{align*}
By Theorem~\ref{locallimit} we have
\[ \lim_{n \to \infty \atop n \in T(p)}
    \frac{1}{n^2} \log |\phi_n(p)|_v = \widehat{\lambda}_v(p) \]
for each $v \in M_K$. This proves the result.
\end{proof}
\begin{remark}\label{lang-conj} Unfortunately Theorem \ref{faltingscor} does not tell us anything about the convergence rate of the sequence $\left(\frac{1}{n^2}\widehat{\lambda}_v(np) \right)_{n \in T(p)}$ or $\left(\frac{1}{n^2} \log |\phi_n(p)|_v\right)_{n \in T(p)}$.
    If $v$ is archimedean, then a conjecture of
    Lang~\cite[(2.1)]{la1} implies that $\widehat{\lambda}_v(np) = \mathcal{O}(\log n)$.
For elliptic curves, this bound can be proved unconditionally using David and Hirata-Kohno's results on linear forms in elliptic logarithms~\cite{dhk}. 
For non-archimedean $v$, we expect that a more refined analysis of the statements in
Proposition~\ref{iv} will give an $\oo(\log n)$ bound for $\widehat{\lambda}_v(np)$ as
well (in particular one should not need diophantine approximation to prove such a bound). 
\end{remark}

If the genus is~2, then we can compare
$\widehat{\lambda}_v$ to another well-known canonical local height function.
In~\cite{fs}, Flynn and Smart construct a function
$\widehat{\lambda}^{\mathrm{FS}}_v:J(K_v) \to \rr$; Uchida~\cite[Theorem~5.3]{uc1} has
shown that this is a canonical local height
    associated to $2\Theta$ for each place $v$ of $K$.
    Let $\kappa = (\kappa_1,\ldots,\kappa_4): J \to \pp^3$ denote the morphism constructed explicitly
    in~\cite[Chapter~3]{cf}.
    The image of $\kappa$ is the Kummer surface associated to~$J$  embedded
    into~$\pp^3$ and we have $\kappa_1(p) = 0$ if and only if $p \in
    \supp(\Theta)$.
    There are homogeneous quartic polynomials $\delta_i \in
    \zz[\mu_0,\ldots,\mu_4][x_1,\ldots,x_4]$ such that if $p \in J$, then \[\delta(\kappa(p)) = \kappa(2p),\]
    where $\delta = (\delta_1, \ldots, \delta_4)$.
    In addition, the relation $\div(\delta_1\circ \kappa) = [2]^*(2\Theta) - 8\Theta$ holds.

    The canonical local height $\widehat{\lambda}^{\mathrm{FS}}_v$ constructed by Flynn and Smart
    is associated to $2\Theta$ and is determined by the condition that
    \begin{equation} \label{keyfs}
        \widehat{\lambda}^{\mathrm{FS}}_v(2p) -
        4\widehat{\lambda}^{\mathrm{FS}}_v(p) =
        -\log\left|\delta_1\left(\frac{\kappa(p)}{\kappa_1(p)}\right)\right|_v
    \end{equation}
    for all $p \in J(K_v)$ such that both $p$ and $2p$ are not in $\supp(2\Theta)$.
    \begin{prop} \label{uchidavsfs}
        If the genus of $X$ is~2 and if $p \in J(K_v) \setminus
        \supp(\Theta)$, then we have
        \[
            \widehat{\lambda}^{\mathrm{FS}}_v(p) = 2\widehat{\lambda}_v(p).
        \]
    \end{prop}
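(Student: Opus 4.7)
The plan is to exploit the characterization of canonical local heights for symmetric divisors by a single functional equation. Both $\widehat{\lambda}^{\mathrm{FS}}_v$ and $2\widehat{\lambda}_v$ are canonical local heights associated with the symmetric divisor $2\Theta$: the first by \cite[Theorem~5.3]{uc1} as recalled above, and the second because doubling a canonical local height for the symmetric divisor $\Theta$ yields one associated with $2\Theta$. By \cite[Proposition~11.1.4]{lfdg} their difference is therefore a constant $\gamma_v \in \rr$, and the task reduces to showing $\gamma_v = 0$.

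To identify $\gamma_v$, I would subtract the two duplication formulas. From \eqref{keyfs} and the squared version of \eqref{key} at $n=2$, namely
\[ 2\widehat{\lambda}_v(2p) - 8\widehat{\lambda}_v(p) \;=\; -\log|\phi_2(p)|_v^2, \]
a direct computation yields $-3\gamma_v = \log|c|_v$, where $c$ is the unique scalar in $K^\times$ satisfying
\[ \delta_1 \circ \kappa \;=\; c \cdot \kappa_1^4 \cdot \phi_2^2 \]
as rational functions on $J$. Such a $c$ exists because both sides have divisor $[2]^*(2\Theta) - 8\Theta$ on $J$ and both are defined over the ground field. The proposition is therefore equivalent to the identity $c = \pm 1$, since $c$ is a universal constant independent of $v$ and $|c|_v = 1$ at every place is what is needed.

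I would establish the identity $c = \pm 1$ by direct calculation for $g=2$. Expanding Uchida's explicit expression for $\phi_2$ as a polynomial in the $\wp$-functions and converting to Mumford coordinates $a_0, a_1, b_0, b_1$ via \eqref{pfns}, one obtains $\phi_2^2$ as an element of $K[\mu_0,\ldots,\mu_4][a_0,a_1,b_0,b_1]$. On the other side, the Kummer coordinates $\kappa_1,\ldots,\kappa_4$ and the duplication polynomial $\delta_1$ are given explicitly in \cite[Chapters~3~and~4]{cf}, again in terms of the Mumford coordinates. The two resulting polynomial expressions for $\delta_1 \circ \kappa$ and $\kappa_1^4 \cdot \phi_2^2$ can then be compared coefficient by coefficient.

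The step I expect to be the main obstacle is precisely this compatibility verification: Uchida's normalisation of $\phi_2$ coming from the hyperelliptic $\sigma$-function is a priori unrelated to the projective normalisation of $\kappa$ and $\delta_1$ used by Flynn and Smart, so one has to line up the leading coefficients carefully to confirm that the ambient constant is exactly $\pm 1$ rather than some nontrivial scalar. Once this identity is established, $|c|_v = 1$ holds at every place $v$, hence $\gamma_v = 0$, and the proposition follows.
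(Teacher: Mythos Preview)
Your approach is essentially the same as the paper's. Both recognise that $2\widehat{\lambda}_v$ and $\widehat{\lambda}^{\mathrm{FS}}_v$ are canonical local heights for $2\Theta$, reduce via the duplication functional equations \eqref{key} and \eqref{keyfs} to the single polynomial identity $\delta_1(\kappa(p)/\kappa_1(p)) = \phi_2(p)^2$ (your $c=1$), and then verify this identity by an explicit symbolic computation in the Mumford coordinates --- the paper does this in \texttt{Magma}. Your write-up is a bit more explicit about why the constant is the only thing left to pin down, but the substance and the eventual compatibility check are identical.
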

    \begin{proof}
        Since $\widehat{\lambda}_v$ is a canonical local height associated to
        $\Theta$, it follows from property (i) of Definition~\ref{neronfns} that
        $2\widehat{\lambda}_v$ is a canonical local height associated to
        $2\Theta$.
        Because of~\eqref{key} and~\eqref{keyfs}, it suffices to show that for a
        point $p \in J \setminus \supp(\Theta)$ we have
        \[
            \delta_1\left(\frac{\kappa(p)}{\kappa_1(p)}\right) = \phi_2(p)^2.
        \]
        We have checked this relation symbolically using explicit expressions for $\phi_2$ and $\delta_1$.
For this computation we used the computer algebra system \verb+Magma+ \cite{magma}.
    \end{proof}

\section{Proof of Theorems~\ref{formulaheight}
and~\ref{formulagenus2}} \label{proofs}

In this section we prove Theorems \ref{formulaheight} and~\ref{formulagenus2}.

\begin{proof}[Proof of Theorem \ref{formulaheight}]
For $v \notin S_\mathrm{bad} \cup S_\infty$ the jacobian $J$ has good reduction, so the special fiber
$\jj_v$ of the N\'eron model $\jj$ of $J$ over $\mathrm{Spec}(\oo_K)$ is an
abelian variety.
Hence for such $v$ we have, for all $p$ not in $\supp(\Theta)$, that
$\widehat{\lambda}_v(p)=i_v(p,\Theta)+\gamma_v$ where $i_v$ is the $v$-adic
intersection multiplicity on $\jj$, and $\gamma_v$ is a constant independent of $p$. There are only finitely many $v \notin S$ such that $\gamma_v$ is non-zero. Put $\delta_S=\sum_{v \notin S} n_v \gamma_v$.
The assumption on $p$ implies that for $v \notin S$ we have
$\widehat{\lambda}_v(p)=\widehat{\lambda}_v(2p)=\gamma_v$. We obtain using
Theorem~\ref{limitexists}
\begin{align*} [K:\qq]\, \widehat{h}(p) = & \sum_{v \in S} n_v \widehat{\lambda}_v(p) + \delta_S \\
                                        = & [K:\qq] \, \widehat{h}_S(p) + \delta_S
\end{align*}
and similarly
\[ [K:\qq] \, \widehat{h}(2p) = [K:\qq] \, \widehat{h}_S(2p) + \delta_S \, . \]
Combining this with $\widehat{h}(2p)=4\widehat{h}(p)$ we deduce the required formula.
\end{proof}
\begin{proof}[Proof of Theorem \ref{formulagenus2}]
    Suppose that $g=2$.
    It clearly suffices to show that if $v$ is a finite place of $K$ such that
     $\ord_v(\Delta) \le 1$, then we have
    \begin{equation} \label{suff}
        \widehat{\lambda}_v(p) = i_v(\Theta, p)
    \end{equation}
    for all $p \in J(K_v) \setminus \supp(\Theta)$.

    So let $v$ be such a place.
    It follows from \cite[Proposition~5.2]{st1} that if $p \notin \supp(\Theta)$, then the
    canonical local height $\widehat{\lambda}^{\mathrm{FS}}_v$ constructed by Flynn and Smart
    satisfies
    \begin{equation} \label{fseq}
        \widehat{\lambda}^{\mathrm{FS}}_v(p) = \log \max_{1 \le i \le 4} \left|
        \frac{\kappa_i(p)}{\kappa_1(p)}\right|_v.
    \end{equation}
    Pick integral coordinates $(x_1, \ldots, x_4)$ for
    $\kappa(P)$ in such a way that $x_j$ is a unit for some $j \in \{1, \ldots, 4\}$.
    Then~\eqref{fseq} implies that
    \[
        \widehat{\lambda}^{\mathrm{FS}}_v(p) = -\log \min_{1 \le i \le 4} \left|
    \frac{x_1}{x_i}\right|_v = -\log \left|\frac{x_1}{x_j}\right|_v = -\log |x_1|_v.
    \]
    But since $\kappa_1(p) = 0$ if and only if $p \in
    \supp(\Theta)$, Proposition~\ref{iv} (ii) implies that
    \[
        -\log |x_1|_v = i_v(2\Theta, p) = 2i_v(\Theta, p).
    \]
    Combined with Proposition~\ref{uchidavsfs}, this proves~\eqref{suff} and hence the theorem.
\end{proof}
\begin{remark} The above proof shows that $\gamma_v=0$ if $\ord_v(\Delta) \le 1$
    and $g=2$. For general $g \geq 2$, if $J$ has good reduction at $v$, one has
\[\gamma_v = \frac{-\log|\phi_2(p)|_v}{3}\]
for any $p$ such  that $p$ and $2p$ are not in $\supp(\Theta)$ mod $v$.
This implies that $\gamma_v \ge 0$ for such $v$.
\end{remark}

\section{Gaps and factorisation} \label{gaps}

Suppose now that we want to calculate $\widehat{h}(p)$ for a rational point $p$ on the
jacobian associated to the hyperelliptic curve $X: y^2 = \sum_{i=0}^{2g+1} \mu_i x^i$
defined over a number field $K$, where $g \ge 2$, $\mu_{2g+1}=1$ and all $\mu_i \in \oo_K$.

In order to apply Theorem~\ref{limitexists} or~\ref{formulagenus2}, a
first requirement is that $p$ is not in
$\supp(\Theta)$ (applying Theorem~\ref{formulaheight} requires, in addition, that $2p$ is not in
$\supp(\Theta)$).
If $p \in \supp(\Theta)$, we can simply try to replace $p$ by a multiple.

Next, one wants to know in advance that the set $T(p)$ of multiples to which one is confined does not contain large gaps. Note that a gap of length $g+1$ gives rise to a point in the intersection $\Theta \cap \Theta_{p}\cap \ldots \cap \Theta_{gp}$ of $g+1$ translates of the theta divisor $\Theta$. These translates are distinct if $p$ is not torsion of order $\leq g$, since the morphism $J \to \widehat{J}$ given by $q \mapsto [\Theta - \Theta_q]$ is an isomorphism. Generically one expects the intersection of these translates therefore to be empty.

In the case $g=2$ we can give the following precise statement.
    \begin{lem} \label{gapsgenus2}
    Let $K$ be a field of characteristic not equal to~$2$ and let $X$ be a genus~2 curve defined over
    $K$ with jacobian $J$. Let $p = [(p_1)+ (p_2) - 2(o)] \in J$ be a non-zero point.
    Then we have
\begin{enumerate}[\upshape (i)]
    \item If $p \in J[2]$, then $\bigcap^N_{n=1} \Theta_{np}$ is non-empty
            for all $N\ge 1$.
    \item Assume that neither $p_1$ nor $p_2$ are Weierstrass points. Then $\Theta \cap \Theta_p \cap \Theta_{2p}$ is
            empty.
    \item The intersection $\Theta \cap \Theta_p \cap \Theta_{2p} \cap \Theta_{3p}$ is
            empty for all $p\notin J[2]$.
\end{enumerate}
\end{lem}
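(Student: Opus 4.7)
The plan is to parametrize $\Theta$ by the Abel--Jacobi map $q \mapsto [(q)-(o)]$ and to make the intersection $\Theta \cap \Theta_p$ completely explicit, then to track how translation by $p$ acts on this finite set. A point $r = [(q_0)-(o)] \in \Theta$ will lie in $\Theta_p$ iff $(q_0)+2(o) \sim (p_1)+(p_2)+(q_1)$ for some $q_1 \in X$, which by Riemann--Roch applied to the degree-$1$ divisor $(q_0)+2(o)-(p_1)-(p_2)$ is equivalent to $h^0((p_1)+(p_2)-(q_0)) \ge 1$. Since $p \ne 0$ forces $(p_1)+(p_2) \not\sim 2(o)$ and hence $h^0((p_1)+(p_2))=1$, the divisor $(p_1)+(p_2)$ is the unique effective representative of its class, pinning down $q_0 \in \{p_1, p_2\}$. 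The identity
\[
    \Theta \cap \Theta_p = \bigl\{[(p_1)-(o)],\, [(p_2)-(o)]\bigr\}
\]
then holds for every nonzero $p$, with the convention $[(o)-(o)]=0$ when $p \in \Theta$ is written non-reducedly as $p = [(p_1)+(o)-2(o)]$. A short computation using $(p_i)+(p_i^-) \sim 2(o)$ next shows that translation by $-p$ acts on these two points by $[(p_1)-(o)] \mapsto [(p_2^-)-(o)]$ and $[(p_2)-(o)] \mapsto [(p_1^-)-(o)]$.

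Part (i) is then immediate: any nonzero 2-torsion point admits a representation $p=[(W_1)+(W_2)-2(o)]$ with $W_1, W_2$ Weierstrass (possibly $W_2=o$), since $J[2]$ is generated by differences of Weierstrass points. Then $2p=0$ and the translates $\Theta_{np}$ alternate between $\Theta$ and $\Theta_p$, so the infinite intersection collapses to $\Theta \cap \Theta_p$, which is nonempty by the parametrization above.

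For (ii) I iterate: $r=[(p_i)-(o)] \in \Theta \cap \Theta_p$ will lie in $\Theta_{2p}$ iff $r-p \in \Theta \cap \Theta_p$, i.e., iff $p_j^- \in \{p_1, p_2\}$ where $\{i, j\}=\{1, 2\}$. The reduced-representation condition $p_1 \ne p_2^-$ rules out $p_j^- = p_i$, so $p_j^- = p_j$ must hold, meaning $p_j$ is Weierstrass. If neither $p_1$ nor $p_2$ is Weierstrass, both candidates fail and the triple intersection is empty.

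For (iii) I split according to how many of $p_1, p_2$ are Weierstrass, allowing the non-reduced representation $p_2=o$ when $p \in \Theta$ (so the condition $p_1 \ne p_2^-$ becomes $p_1 \ne o$, i.e., $p \ne 0$). If none are, (ii) applies. If both are, then $2p=0$ contradicts $p \notin J[2]$. Otherwise, WLOG $p_2$ is Weierstrass and $p_1$ is not; by (ii) only $r=[(p_1)-(o)]$ survives in $\Theta \cap \Theta_p \cap \Theta_{2p}$, and iterating the translation formula gives $r-p=[(p_2)-(o)]$ and $r-2p=[(p_1^-)-(o)]$. For $r \in \Theta_{3p}$ one would need $p_1^- \in \{p_1, p_2\}$; but $p_1^- = p_1$ forces $p_1$ Weierstrass, and $p_1^- = p_2$ forces $p_1 = p_2^-$, both contradicting the standing hypotheses. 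The most delicate step is the clean description of $\Theta \cap \Theta_p$ via the $h^0$ computation, together with the uniform handling of the case $p \in \Theta \setminus J[2]$ through the non-reduced representation with $p_2=o$.
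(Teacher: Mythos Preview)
Your proof is correct and follows essentially the same approach as the paper: both identify $\Theta \cap \Theta_p$ explicitly via Riemann--Roch as a two-point set determined by $p_1,p_2$, and then track what survives under successive translates by $p$. The only differences are cosmetic---you use the convention $\Theta_p=\Theta+p$ (giving $\Theta\cap\Theta_p=\{[(p_1)-(o)],[(p_2)-(o)]\}$) whereas the paper effectively uses $\Theta_p=\Theta-p$ (giving the points $[(p_i^-)-(o)]$), and you iterate the translation map $r\mapsto r-p$ rather than writing out $2p$ and $3p$ explicitly, which is slightly cleaner but not a different idea.
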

\begin{proof} Note that $p$ uniquely determines 
    the unordered pair $\{p_1,p_2\}$ by Riemann-Roch.
    If $p \in J[2] \setminus \{0\}$, then both
    $p_1$ and $p_2$ are Weierstrass points. One then readily checks that in this situation both $[(p_1)-(o)]$ and
$[(p_2)-(o)]$ lie in $\Theta \cap \Theta_p$, which proves~(i).

    Now let $p \in J \setminus \{0\}$ be arbitrary and suppose
$q=[(q_1)- (o)]\in \Theta \cap \Theta_p$.
Then there exists $r=[(r_1)- (o)] \in \Theta$ such that $p=r-q$ and hence
\[
    (p_1) + (p_2) - 2(o) \sim (r_1) - (q_1).
\]
By Riemann-Roch this implies
\begin{equation} \label{rqp-equation}
    (r_1) - (q_1) \in \{(p_1) - (p^-_2), (p_2) - (p^-_1)\}
\end{equation}
and hence $q_1=p_1^-$ or $q_1=p_2^-$.
Without loss of generality we assume that $q_1 = p^-_1$.

Suppose that $q \in \Theta \cap \Theta_p \cap \Theta_s$ where
$s = 2p = [(s_1)+ (s_2)-2(o)]$. Similarly as before we find that $q_1=s_1^-$ or $q_1=s_2^-$. Hence $s_i = p_1$ for some $i
\in \{1,2\}$, say $s_1 = p_1$.
This implies
\[
    p = s - p =  [(s_2) - (p_2)].
\]
Again by Riemann-Roch we find
\[
    (s_2) - (p_2) \in \{(p_1) - (p^-_2), (p_2) - (p^-_1)\}
\]
leading to $p_2 = p^-_1$ or $p_2 = p^-_2$. The first possibility implies that $p=0$, which we excluded, so we end up with $p_2=p_2^-$. This proves (ii).

To prove (iii), we may assume that $p_2 = p^-_2$, so that $2p = [2(p_1)-2(o)]$. 
Note that under this assumption $p \notin J[3]$, since otherwise we would have 
$2p = -p$, which implies $p_1 = p^-_1$ or $p_1 = p_2^-$, and hence $p \in J[2]\cap J[3] =
\{0\}$.

By the arguments above, we may assume that a point $q \in \Theta \cap \Theta_p \cap \Theta_{2p}$
satisfies $q_1 = p_1^-$.
If we assume, in addition, that $q \in \Theta_{t}$, where $t = [(t_1)+(t_2)]
=3p \ne 0$, then Riemann-Roch implies $p_1 \in \{t_1, t_2\}$ as
in~\eqref{rqp-equation}, say $p_1 = t_1$.
But then
\[
    p = 3p-2p = [(t_1) + (t_2) - 2(p_1)] = [(t_2) - (p_1)] 
\]
which implies $p \in J[2]$.
\end{proof}

Note that we also need to find the primes dividing the ideal $(\Delta)$ if we want to apply
Theorem~\ref{limitexists} or~\ref{formulagenus2}.
In practice, this becomes problematic if $N_{K/\qq}(\Delta)$ is large.
The following result generalizes equation~(21) in \cite{ewNY}.
\begin{thm} \label{practical} 
    Assume that $X$ is defined over $\qq$.
    Let $p \in J(\qq) \setminus \supp(\Theta)$ such that
$\phi_n(p) \in \zz$ for all $n\ \ge 1$ and put $E_n= \phi_n(p)$.
Let $S'$ be a finite set of primes of $\qq$ containing $S_\mathrm{bad}$ and write $S=S'\cup\{\infty\}$. 
 Assume that $l$ is a positive integer such that for all reductions $\widetilde{J}$
 of $J$ modulo primes not in $S'$
 we have that $T(\widetilde{p})$ contains no gap larger than $l$, where
 $\widetilde{p}$ is the reduction of $p$.
 Then we have
 \[\widehat{h}_S(p) = \lim_{n \to \infty \atop n
 \in T(p)} \frac{1}{n^2} \log \left(
 \frac{|E_n|}{\gcd(|E_n|,|E_{n+1}|,\ldots,|E_{n+l}|)} \right). \]

 \end{thm}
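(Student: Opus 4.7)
The plan is to express both sides of the claimed identity in terms of the $\ell$-adic valuations of the integers $E_n$ and to match them using the product formula on $\qq$.

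By Theorem~\ref{limitexists} one has $\widehat{h}_S(p) = \lim_{n \to \infty,\ n \in T(p)} \frac{1}{n^2} \sum_{v \in S} n_v \log|E_n|_v$. Because $E_n \in \zz \setminus \{0\}$ for $n \in T(p)$, the product formula $\sum_v n_v \log|E_n|_v = 0$ rewrites the $S$-part as $\sum_{\ell \notin S'} \ord_\ell(E_n) \log \ell$. On the other hand
\[
\log(|E_n|/d_n) = \sum_\ell \bigl(\ord_\ell(E_n) - \ord_\ell(d_n)\bigr)\log\ell,
\]
where $\ord_\ell(d_n) = \min\{\ord_\ell(E_{n+k}) : 0 \le k \le l,\ E_{n+k}\neq 0\}$. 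Subtracting, the theorem reduces to showing
\[
\frac{1}{n^2}\sum_{\ell \in S'}\bigl(\ord_\ell(E_n) - \ord_\ell(d_n)\bigr)\log\ell \;-\; \frac{1}{n^2}\sum_{\ell \notin S'} \ord_\ell(d_n)\log\ell \;\longrightarrow\; 0
\]
as $n \to \infty$ through $T(p)$.

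I would handle the second inner sum by showing it vanishes identically. For $\ell \notin S'$ the jacobian $J$ has good reduction at $\ell$, so the N\'eron model $\mathcal{J}$ over $\zz_\ell$ is an abelian scheme. The relation $\div(\phi_m) = [m]^*\Theta - m^2\Theta$ extends to $\mathcal{J}$, and combining this with Proposition~\ref{iv}(ii) and the projection formula yields $\ord_\ell(E_m) = i_\ell(\overline{\Theta}, \overline{mp}) - m^2 i_\ell(\overline{\Theta}, \overline{p})$ for every $m \in T(p)$. The integrality of all the $E_n$ forces $i_\ell(\overline{\Theta}, \overline{p}) = 0$: indeed $T(\widetilde{p})$ is non-empty by the gap hypothesis, and any $m \in T(\widetilde{p})$ satisfies $i_\ell(\overline{\Theta}, \overline{mp}) = 0$, so the formula would give $\ord_\ell(E_m) < 0$ unless $i_\ell(\overline{\Theta}, \overline{p}) = 0$. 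Therefore $\ord_\ell(E_m) = i_\ell(\overline{\Theta}, \overline{mp})$, which vanishes exactly when $m \in T(\widetilde{p})$. The gap hypothesis produces $k \in \{0, \ldots, l\}$ with $n + k \in T(\widetilde{p})$, whence $\ord_\ell(E_{n+k}) = 0$ and $\ord_\ell(d_n) = 0$. I expect this step to be the main obstacle, as it relies on extending the divisor of $\phi_n$ to the N\'eron model carefully and on exploiting the integrality hypothesis to rule out $\widetilde{p} \in \widetilde{\Theta}$.

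For the first inner sum the set $S'$ is finite, so it suffices to treat each term. By Theorem~\ref{locallimit}, $\frac{1}{m^2}\ord_\ell(E_m) \to -\widehat{\lambda}_\ell(p)$ as $m \to \infty$ through $T(p)$. Substituting $m = n + k$ for $k \in \{0, \ldots, l\}$ and using that $(n+k)^2/n^2 \to 1$ uniformly on this bounded range, the same limit holds for $\frac{1}{n^2}\ord_\ell(E_{n+k})$ along those $n$ with $n + k \in T(p)$. Since $k = 0$ is always admissible and the minimum of finitely many sequences sharing a common limit has that limit as well, $\frac{1}{n^2}\ord_\ell(d_n)$ converges to $-\widehat{\lambda}_\ell(p)$, the same limit as $\frac{1}{n^2}\ord_\ell(E_n)$, so their difference is $o(1)$. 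Multiplying by $\log\ell$ and summing over the finite set $S'$ gives $o(1)$, and the theorem follows.
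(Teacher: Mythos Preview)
Your argument is correct and follows essentially the same two-step structure as the paper: (a) the gcd $d_n$ is supported only on primes in $S'$, and (b) for each $v\in S'$ the sequence $n^{-2}\log|E_n|_v$ converges (hence is Cauchy), so the residual min-terms are $o(1)$. The paper reaches (a)+(b) more directly: rather than invoking the product formula to flip to $\sum_{\ell\notin S'}\ord_\ell(E_n)\log\ell$, it simply writes $\prod_{v\in S}|E_n|_v^{n_v}=|E_n|\prod_{v\in S'}|E_n|_v^{n_v}$, asserts from the gap hypothesis that $d_n$ is $S'$-supported, and then factors $d_n=\prod_{v\in S'}|E_n|_v^{-n_v}\cdot\min(1,|E_{n+1}/E_n|_v^{-1},\ldots)^{n_v}$, finishing with the Cauchy property exactly as you do.

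One caution on your N\'eron-model computation for (a): the identity $\ord_\ell(E_m)=i_\ell(\overline{\Theta},\overline{mp})-m^2 i_\ell(\overline{\Theta},\overline{p})$ presupposes that $\div(\phi_m)$ has no vertical component on $\mathcal{J}$, i.e.\ that the constant $\gamma_\ell$ of Proposition~\ref{iv}(iii) vanishes. In general one only gets $\ord_\ell(E_m)=i_\ell(\overline{\Theta},\overline{mp})-m^2 i_\ell(\overline{\Theta},\overline{p})-(m^2-1)\gamma_\ell$ from~\eqref{key} and Proposition~\ref{iv}(iii). Your conclusion is still correct, since applying this to any $m\in T(\widetilde{p})$ (so $i_\ell(\overline{\Theta},\overline{mp})=0$) and letting $m\to\infty$ through such $m$ forces both $i_\ell(\overline{\Theta},\overline{p})=0$ and $\gamma_\ell=0$ from the integrality of $E_m$; but the argument as written needs this extra step. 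The paper's one-line assertion hides the same subtlety.
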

 \begin{proof}
Note that
\[ \prod_{v \in S} |\phi_n(p)|^{n_v}_v = |E_n|\prod_{v \in S'} |E_n|^{n_v}_v \]
hence
\[ \widehat{h}_S(p) = \lim_{n \to \infty \atop n \in T(p)} \frac{1}{n^2} \log
|E_n|\prod_{v \in S'} |E_n|^{n_v}_v  \, . \]
By assumption, we have that for each given $n \in T(p)$ a prime $v \notin S'$ does not
occur in all of $E_n,\ldots,E_{n+l}$ simultaneously, so that the gcd is only
composed of primes in $S'$.
In fact we have
\begin{align*} \gcd(|E_n|,\ldots,|E_{n+l}|) = & \prod_{v \in S'} \min(|E_n|_v^{-1},\ldots,|E_{n+l}|_v^{-1})^{n_v} \\
= & \prod_{v \in S'} |E_n|_v^{-n_v} \min(1,|E_{n+1}/E_n|_v^{-1},\ldots,|E_{n+l}/E_n|_v^{-1})^{n_v} \, .
\end{align*}
Thus it suffices to show that in the limit as $n \to \infty$ one has
\begin{equation} \label{gcdsequence}
    \frac{1}{n^2}\log\min(1,|E_{n+1}/E_n|_v^{-1},\ldots,|E_{n+l}/E_n|_v^{-1}) \to
    0
\end{equation}
for $n \in T(p)$.
By Theorem~\ref{locallimit}, the sequence $\left(n^{-2} \log|E_n|_v\right)_{n\in T(p)}$
converges for every $v \in S'$, hence is a Cauchy sequence.
This proves~\eqref{gcdsequence} and therefore the theorem.
 \end{proof}
 Using Theorem~\ref{practical} and Lemma~\ref{gapsgenus2}, we can develop a method for the computation of
$\widehat{h}(p)$ if $K=\qq$ and $g=2$ which requires no factorisation at all.
 \begin{cor}\label{g2method}
     Suppose that $g=2$ and that $p  \in J(\qq)$ satisfies
     $\wp_{2j}(p),\wp_{22k}(p) \in \zz$ for $j,k \in \{1,2\}$, and 
     $\gcd(a(x), b(x)) = 1$, where 
     $(a(x), b(x))$ is the Mumford representation of $p$.
      Suppose, moreover, that $\phi_n(p) \in \zz$ for all $n \ge 1$.
 Then we have
 \[\widehat{h}(p) =  \lim_{n \to \infty \atop n
 \in T(p)} \frac{1}{n^2} \log \left(
 \frac{|E_n|}{\gcd(|E_n|,|E_{n+1}|,|E_{n+2}|)} \right). \]
 \end{cor}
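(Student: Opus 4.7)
The plan is to chain Theorems~\ref{formulagenus2} and~\ref{practical}, taking $S' = S_{\mathrm{bad}}$, $S = S' \cup \{\infty\}$, and $l = 2$. Theorem~\ref{formulagenus2} will produce $\widehat{h}(p) = \widehat{h}_S(p)$, and Theorem~\ref{practical} will rewrite $\widehat{h}_S(p)$ as the stated $\gcd$-limit. The key input that validates the choice $l = 2$ is Lemma~\ref{gapsgenus2}~(ii), applied to every reduction of $p$ at a prime of good reduction.

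To apply Theorem~\ref{formulagenus2}, note that $S$ trivially contains $\{v \in S_{\mathrm{bad}} : \ord_v(\Delta) \ge 2\} \cup S_\infty$. The integrality $\wp_{2j}(p) \in \zz$ together with~\eqref{pfns} forces $a_0, a_1 \in \zz$, and since $a(x)$ is monic of degree $g = 2$ its reduction $\widetilde{a}(x)$ retains degree~$2$ at every finite prime~$v$. By the characterisation of the theta divisor recalled after Theorem~\ref{formulaheight}, this gives $p \notin \supp(\widetilde{\Theta})$ for every $v \notin S_{\mathrm{bad}}$, so Theorem~\ref{formulagenus2} applies and yields $\widehat{h}(p) = \widehat{h}_S(p)$.

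For the second step, $\phi_n(p) \in \zz$ is assumed, so it remains to verify the gap condition of Theorem~\ref{practical} with $l=2$: for every $v \notin S_{\mathrm{bad}}$ we need $\widetilde{\Theta} \cap \widetilde{\Theta}_{\widetilde{p}} \cap \widetilde{\Theta}_{2\widetilde{p}}$ to be empty in $\widetilde{J}$. By Lemma~\ref{gapsgenus2}~(ii), this follows once we know that neither $\widetilde{p}_1$ nor $\widetilde{p}_2$ is a Weierstrass point of $\widetilde{X}$. The defining Mumford relation $b^2 \equiv f \pmod{a}$ (which reduces well mod $v$ since $v \notin S_{\mathrm{bad}}$ forces $v \neq 2$, making the half-integral coefficients of $b$ automatically $v$-integral) shows that $\widetilde{p}_i$ is Weierstrass precisely when $\widetilde{a}$ and $\widetilde{b}$ share a root.

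The one subtle point is this last one: one must deduce $\gcd(\widetilde{a}, \widetilde{b}) = 1$ from $\gcd(a, b) = 1$ for every $v \notin S_{\mathrm{bad}}$. Reading the hypothesis in its natural strong form that $(a, b)$ generates the unit ideal in $\zz[x]$, the descent is immediate, and the two theorems then combine to produce the claimed formula. Under a weaker reading of the hypothesis one would in addition need to verify that primes dividing $\operatorname{Res}(a, b)$ but not $\Delta$ contribute trivially to both sides, which I expect can be done by exploiting that at such primes $\widehat{\lambda}_v(p) = 0$ (by the remark after Theorem~\ref{formulagenus2}), but would introduce more bookkeeping.
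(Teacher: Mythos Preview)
Your overall strategy—chain Theorem~\ref{practical} (to identify the $\gcd$-limit as $\widehat{h}_S(p)$) with Theorem~\ref{formulagenus2} (to identify $\widehat{h}_S(p)$ with $\widehat{h}(p)$)—is exactly the paper's approach. The gap is in your verification of the $l=2$ hypothesis of Theorem~\ref{practical} with $S'=S_{\mathrm{bad}}$.

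The hypothesis $\gcd(a(x),b(x))=1$ is to be read in $\qq[x]$: it encodes precisely that neither $p_1$ nor $p_2$ is a Weierstrass point of $X$ over $\qq$. It does \emph{not} descend to coprimality of $\widetilde a,\widetilde b$ modulo every good prime, and your ``strong reading'' that $(a,b)$ generates the unit ideal of $\zz[x]$ is neither the intended nor a natural reading (indeed $b$ is only guaranteed to lie in $\zz[\tfrac12][x]$). At a good prime $v$ dividing $\operatorname{Res}(a,b)$, one of the $\widetilde p_i$ \emph{is} a Weierstrass point, Lemma~\ref{gapsgenus2}~(ii) does not apply, and in general $T(\widetilde p)$ can have gaps of length~$3$ (only Lemma~\ref{gapsgenus2}~(iii) is available). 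So with $S'=S_{\mathrm{bad}}$ the hypothesis of Theorem~\ref{practical} for $l=2$ is not met.

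The paper's remedy is simply to enlarge $S'$: take $S'$ to be the union of $S_{\mathrm{bad}}$ with the finitely many primes at which some $p_i$ reduces to a Weierstrass point, and put $S=S'\cup\{\infty\}$. Then Lemma~\ref{gapsgenus2}~(ii) applies at every $v\notin S'$, so Theorem~\ref{practical} with $l=2$ gives the $\gcd$-limit equal to $\widehat{h}_S(p)$ for this larger $S$. The integrality of the $\wp_{2j}(p),\wp_{22k}(p)$ still forces $p\notin\supp(\widetilde\Theta)$ for all $v\notin S$, so Theorem~\ref{formulagenus2} applies with the same $S$ and yields $\widehat{h}(p)=\widehat{h}_S(p)$. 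Your fallback observation that $\widehat{\lambda}_v(p)=0$ at the extra primes is correct and amounts to the same thing (enlarging $S$ does not change $\widehat{h}_S(p)$), but once you see this it is cleaner to run both theorems with the larger $S$ from the outset rather than patch the formula afterwards.
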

 \begin{proof}
 Write $p =[(p_1)+( p_2)-2(o)]$, where both $p_1,p_2 \in X(K)$ and $K= \qq$ or $K$ is a
 quadratic extension of $\qq$. The condition $\gcd(a(x), b(x)) = 1$ ensures that neither $p_1$ nor $p_2$ is a Weierstrass point on $X$. In order to apply Theorem~\ref{practical} we 
 let $S'$ denote the union of $S_\mathrm{bad}$ and the finite set of places $v$ such that
 $p_1$ or $p_2$ reduces to a Weierstrass point modulo $w$ for some place $w$ of $K$
 dividing $v$. 
By Lemma~\ref{gapsgenus2} we can then take $l=2$. Put $S=S'\cup \{\infty\}$. 

By Theorem~\ref{practical}, the right hand side of the equality to be proven equals $\widehat{h}_S(p)$. The assumptions that $\wp_{2j}(p),\wp_{22k}(p) \in \zz$ for $j,k \in \{1,2\}$ imply that for $v \notin S$ the point $p$ does not lie on the theta divisor modulo $v$. The equality itself then follows by applying Theorem~\ref{formulagenus2}.
  \end{proof}
 \begin{remark} \label{integrality}
Assuming that all the $\phi_n(p)$ are integers may seem like a strong restriction,
but, possibly after applying a simple coordinate transformation to $X$, we can at least 
always assume that all $\wp_{gj}(p)$ and $\wp_{ggk}(p)$ are integral.
Then, a conjecture of Uchida \cite[Conjecture~4.14]{uc} predicts that all $\phi_n(p)$ are
integral.
So we can simply test
along the way  whether $E_n$ has a nontrivial denominator for
$n=1,2,\ldots$; such an $n$ would then yield a counterexample
to~\cite[Conjecture~4.14]{uc}.
\end{remark}
\begin{remark} We note that if $p =[(x_1,y_1)+\ldots+(x_g,y_g)-g(o)]\in J(\qq)$ such that all $x_i$ and $y_i$
are integral, then all $\wp_{gj}(p)$ are integral, but this need not hold for all
$\wp_{ggk}(p)$.
Consider, for instance, the Jacobian $J$ of the hyperelliptic curve $X$ given by the affine model
\[
y^2 = 1+2x+3x^2+4x^3+5x^4+x^5
\]
and the point $p=[(1,4)+ (-2,5)-2(o)] \in J$, satisfying
\[
 \wp_{21}(p)= -1,\,
 \wp_{22}(p)= 2,\,
 \wp_{221}(p)= -2/3,\,
 \wp_{222}(p)= 26/3.
\]
\end{remark}

\section{Implementation}\label{implementation}
Suppose that $g=2$.
We have implemented the computation of the values of $\phi_n$ for this case in
\verb+Magma+.
Expressions for the $\wp$-functions $\wp_{11},\,\wp_{112}$ and $\wp_{111}$ 
in terms of $\wp_{12}, \wp_{22}, \wp_{122}$ and $\wp_{222}$ are given in
\cite[Example~5.9]{uc}.
Uchida shows that all $\phi_n \in \zz[1/2, \mu_0,\ldots,\mu_4]$ and conjectures
that in fact $\phi_n \in \zz[\mu_0,\ldots,\mu_4]$.
The division polynomials $\phi_n$ for $n \in \{1,\ldots,5\}$ were already
computed by Uchida and we are grateful to him for sharing them with us.
In fact it is not hard to compute these using a method already discussed by
Kanayama~\cite{ka1} who first constructed the division polynomials in the genus~2
case.

We have not computed any of the $\phi_n$ for $n > 5$ as polynomials because they
quickly become rather complicated.
Instead we employ a recurrence relation due to Kanayama \cite[Theorem~9
(corrected)]{ka2} which can be used to compute $\phi_{2n+1}$ ($n \ge 2$) and $\phi_{2n}$ ($n \ge 3$)
in terms of $\phi_{n-2},\ldots,\phi_{n+2}$ and some of their partial derivatives.
Given $p \in J(\qq) \setminus \supp(\Theta)$, we apply this method for the calculation of $\phi_n(p)$, where $n = 6,7,8$; our
method relies on finding partial derivatives of $\phi_2, \ldots, \phi_5$ for
our specific $J$ and then evaluating them at $p$.

Having determined $\phi_1(p), \ldots, \phi_8(p)$, we then proceed to use
Uchida's recurrence relations from \cite[Example~6.6]{uc} to compute $\phi_n(p)$
for $n \ge 9$.
These are preferable to Kanayama's recurrence relations since they only need
the values $\phi_m(p)$ for $m \in \{1,\ldots,5\}$ and $m \in
\{\frac{n-7}{2},\ldots, \frac{n+7}{2}\}$ (resp. $m \in
\{\frac{n-8}{2},\ldots, \frac{n+8}{2}\}$) if $n$ is odd (resp. even); no derivation of polynomials is required.

We have implemented the computation of $\widehat{h}(p)$ using both
Theorem~\ref{formulagenus2} and Corollary~\ref{g2method}.
If we can factor $\Delta$, then it is usually much faster to
use Theorem~\ref{formulagenus2} and work locally at each relevant place.
The code is available on the second author's homepage
{\url{http://www.math.uni-hamburg.de/home/js.mueller/#code}}.

Several other methods exist for the computation of canonical heights on hyperelliptic
jacobians.
For instance, Holmes~\cite{ho} and the second author~\cite{mu1} have independently
developed algorithms that can be used for arbitrary $g \ge 1$; the current record
computation has $g=10$, see \cite[\S6]{mu1}. 
Their methods need integer factorisation, regular models of the curves and theta functions
on $\cc^g$. 

For $g=2$ other algorithms are available.
These all require explicit arithmetic on a model of the Kummer surface associated to $J$
in $\pp^3$, see Section~\ref{proofsI}.
The original method of Flynn and Smart~\cite{fs} requires no integer factorisation, but
needs the computation of a certain multiple $np$ of the point $p \in J(K)$ whose canonical height
we want to compute.
As $n$ can become quite large (see~\cite[\S1]{st1}), this often becomes impractical.
A modified version due to Stoll~\cite{st1} remedies this, but requires integer
factorisation. However, one can combine this modified version with the original method of
Flynn and Smart to avoid difficult factorisations, see~\cite[\S6]{st1}.
Further improvements are given in~\cite{ms}.
Another algorithm which is very similar to Stoll's method is due to Uchida~\cite{uc1}.
One could extend these techniques to higher genus if one had formulas for explicit
arithmetic on a model of the Kummer variety. 
This is already quite difficult in genus~3, see for instance~\cite{mu2}; Stoll has
recently found an analogue of his genus~2 algorithm in genus~3~\cite{st2}

Currently, {\tt Magma} contains an implementation of the algorithms from~\cite{mu1} for
general $g$ and~\cite{st1} for $g=2$.
When $g=2$, then the algorithm from~\cite{st1} is usually faster than the algorithms using
Theorem~\ref{formulagenus2} or Corollary~\ref{g2method}, which in turn are usually faster than the
implementation of the algorithm from~\cite{mu1} if we are only interested in a few digits
of precision.
\section{Examples}\label{examples}
\subsection{Height computation}
Let $X$ be given by the affine model
\[
y^2 = 1+2x+3x^2+4x^3+5x^4+x^5
\]
and let $J$ be the Jacobian of $X$.
We want to compute the canonical height $\widehat{h}(p)$ of the
point $p=[(1,4)+ (-2,-5)-2(o)] \in J$, satisfying
\[
 \wp_{21}(p)= -1,\,
 \wp_{22}(p)= 2,\,
 \wp_{221}(p)= 6,\,
 \wp_{222}(p)= 2.
\]
Using the implementation of the Flynn-Smart algorithm~\cite{fs} modified by
Stoll~\cite{st1} in \verb+Magma+, we compute $\widehat{h}(p) \sim 0.905661971737515301104367671719 $.

We can use Corollary~\ref{g2method} to compute $\widehat{h}(p)$ without any factorisations,
see Table~\ref{globalcomp}.
If we are only interested in a few digits of precision, it suffices to compute $\phi_n(p)
$ for $n\le 100$. In this case the bulk of the computation is spent on the computation of
$\phi_n(p)$ for $n\le8$, because, as mentioned in Section~\ref{implementation}, we need to
manipulate polynomials.
For the computation of $\phi_n(p)$ for $n \ge 9$ recurrence relations are used which only
need the values $\phi_m(p)$ for a few $m<n$, see Section~\ref{implementation}.

If we are interested in more than~4 digits of precision, then the computation of
$\widehat{h}(p)$ using Theorem~\ref{formulagenus2} is much faster, see Table~\ref{localcomp}.
The prime factorisation of the  discriminant of $X$ is $\Delta=2^8\cdot 86477$, so it suffices to consider the set of
places $S= \{2, \infty\}$, since $p$ has integral
$\wp_{2j}(p),\,\wp_{22k}(p)$.

\begin{center}
\begin{table}\begin{tabular}{|l|l|l|}
\hline
Iterations & Running time in seconds & Error \\
\hline
10 & 0.33 & $3.60\cdot 10^{-2}$ \\
100 & 0.36 & $4.67\cdot 10^{-4}$ \\
200 & 0.74 & $1.27\cdot 10^{-4}$  \\
300 & 2.60 & $6.92\cdot 10^{-5}$  \\
400 & 7.73 & $3.49\cdot 10^{-5}$  \\
500 & 18.990 & $2.45\cdot 10^{-5}$  \\
\hline
\end{tabular}
\caption{$\;\;$Computing $\widehat{h}(p)$ using Corollary~\ref{g2method}}\label{globalcomp}\end{table}
\end{center}
\begin{center}
\begin{table}\begin{tabular}{|l|l|l|}
\hline
Iterations & Running time in seconds & Error \\
\hline
10 & 0.72 & $3.60\cdot 10^{-2}$ \\
100 & 0.74 & $4.67\cdot 10^{-4}$ \\
1000 & 0.89 & $ 4.82\cdot 10^{-6}$ \\
5000 & 1.58 & $1.93\cdot 10^{-7}$  \\
10000 & 2.45 & $5.86\cdot 10^{-8}$  \\
15000 & 3.30 & $2.26\cdot 10^{-8}$  \\
20000 & 4.14 & $1.65\cdot 10^{-8}$  \\
25000 & 4.96 & $1.21\cdot 10^{-8}$  \\
\hline
\end{tabular}
\caption{$\;\;$Computing $\widehat{h}(p)$ using Theorem~\ref{formulagenus2}}\label{localcomp}\end{table}
\end{center}

\subsection{Order of growth of $\widehat{\lambda}_v(np)$}
As was remarked before, just by using Faltings's Theorem~\ref{faltings} we are not able to say
anything about the convergence rate of the sequence
$\left(\frac{1}{n^2} \log |\phi_n(p)|_v\right)_{n \in T(p)}$ for a given place $v$.
By~\eqref{key}, finding this convergence rate is equivalent to finding the order of growth of $\widehat{\lambda}_v(np)$.

We have applied our implementation described in Section~\ref{implementation} to gather data 
on the asymptotic behaviour and the implied constants of the
sequence $(\widehat{\lambda}_{v}(np))_{n\in\nn}$, where $p \in J(\qq)$ is a rational point on a genus~2 jacobian and $v \in M_\qq$.
To this end we varied the place $v$, the coefficients $\mu_i$ and the point $p$.
More precisely, we considered about~2000 random genus~2 curves with $|\mu_i| \le 50$ for
$i \in \{1,\ldots,4\}$; we
computed $\widehat{\lambda}_{v}(np)$ for $v=\infty$ and all non-archimedean $v$ such that $\ord_v(\Delta) \ge
2$, for all $p \notin \supp(\Theta)\cap J[2]$ of Kummer surface height bounded
by~500 and for all $n \in \{1,\ldots,15000\}\cap T(p)$.
We also considered about~100 examples of curves with $50<|\mu_i|\le1000$.
\subsubsection{Archimedean places}
Let us first describe the case $v=\infty$.
As mentioned in Remark~\ref{lang-conj}, by a conjecture of Lang we should have
\[
\widehat{\lambda}_\infty(np) = \oo(\log n)
\]
for $n \in T(p)$. We have used our implementation to test this prediction.

See Figure~\ref{arch-growth} for the values of $\widehat{\lambda}_\infty(np)$, where
$n \in \{1,\ldots,15000\}$ and $p \in J_1(\qq)$ has Mumford representation
\[
(x^2 + 1081/25x + 148/5, 13803/125x + 1799/25).
\]
Note that every $n \in \{1,\ldots,15000\} $ lies in $T(p)$.
Here $J_1$ is the jacobian of the genus~2 curve given by
\[
y^2 = 25+20x+30x^2+40x^3+50x^4+x^5.
\]
All examples we have considered exhibit a similar behavior. The resulting data suggest that we may even have
\[
    \widehat{\lambda}_\infty(np) = \oo((\log n)^A)
\]
for some $0<A<1$ depending on $X$ and $p$, and that the implied constant is rather small compared to the coefficients $\mu_i$.

\begin{figure}
 \begin{center}
    \includegraphics[width=\textwidth]{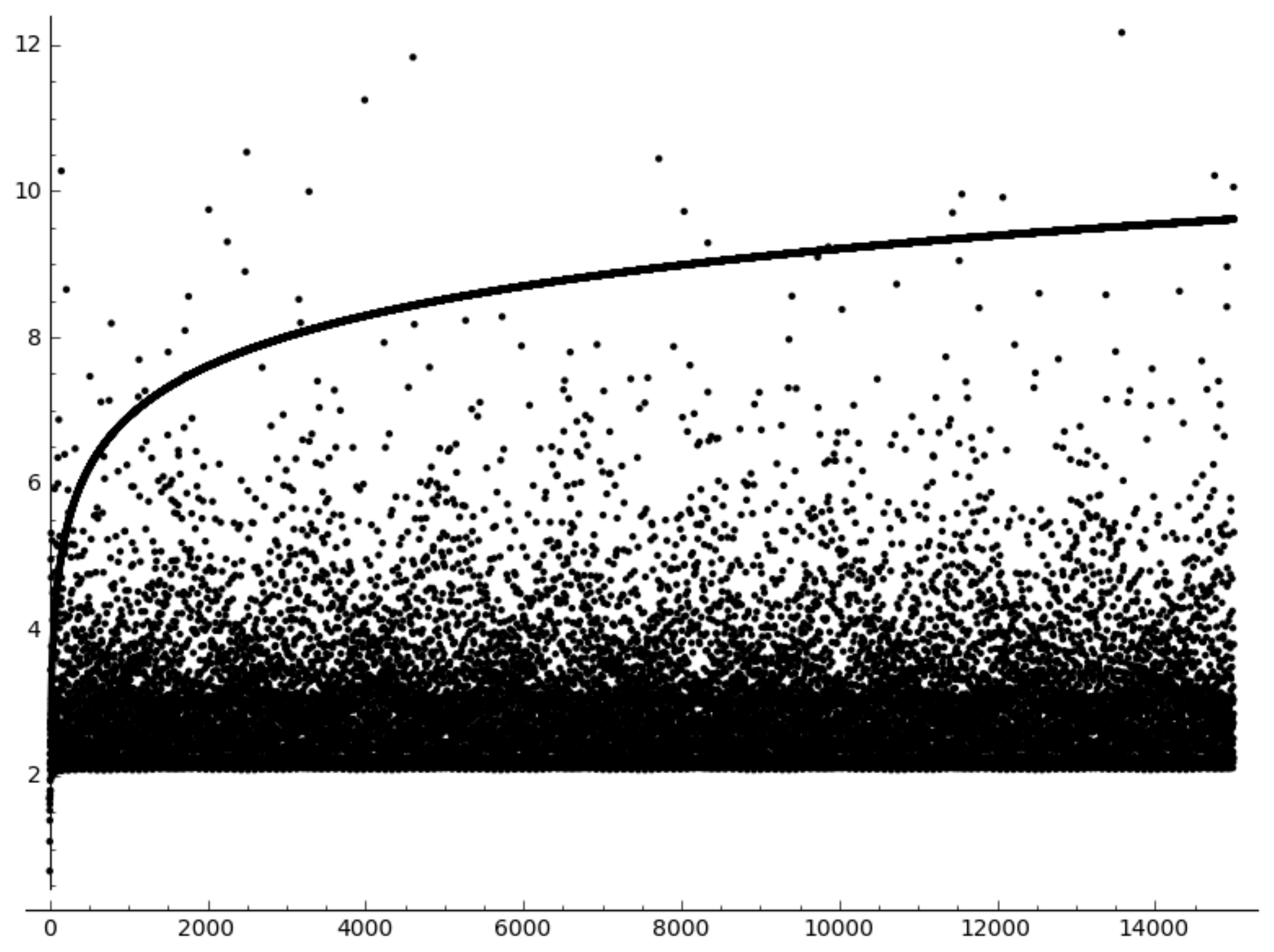}
    \caption{$\widehat{\lambda}_\infty(np)$ and $\log(n)$ for $n \in\{1,\ldots,15000\}$}\label{arch-growth}
  \end{center}
\end{figure}

\subsubsection{Non-archimedean places}
Let $J_2$ be the
jacobian of the genus~2 curve given by
\[
y^2 = 100+200x+300x^2+400x^3+500x^4+x^5
\]
and let $q \in J_2(\qq)$ have Mumford representation
\[
(x^2 + 400x + 200, 3990x + 1990).
\]
Then $q$ reduces to a singular point on the reduction of $J_2$
modulo $v=2$;
the values of $\widehat{\lambda}_2(nq)$ are shown in
Figure~\ref{na-growth}.

Note the apparent formation of finitely many horizontal lines, as well as a set 
of `sporadic' points following the graph of $\log n$. This dual behavior can perhaps be
explained using Proposition \ref{iv}~(ii) and (iii) as follows: the set of specialisations
$n\tilde{p}_2$ of the $nq$ in the special fiber of the N\'eron model modulo $v$ is a
finite group~$R$. The group $R$ has a partition $R = R_1 \sqcup R_2$ into points which are
on resp. off the closure of the theta divisor modulo $v$. The values of
$\widehat{\lambda}_2(nq)$ display a $\log n$ behavior for $n\tilde{p}_2 \in R_1$, and are given by $\gamma(\mathcal{C})$, with $\mathcal{C}$  the component containing $n\tilde{p}_2$, when $n\tilde{p}_2 \in R_2$.
Again, a similar behaviour occurred in all our examples.
\begin{figure}[t]
    \begin{center}
   \includegraphics[width=\textwidth]{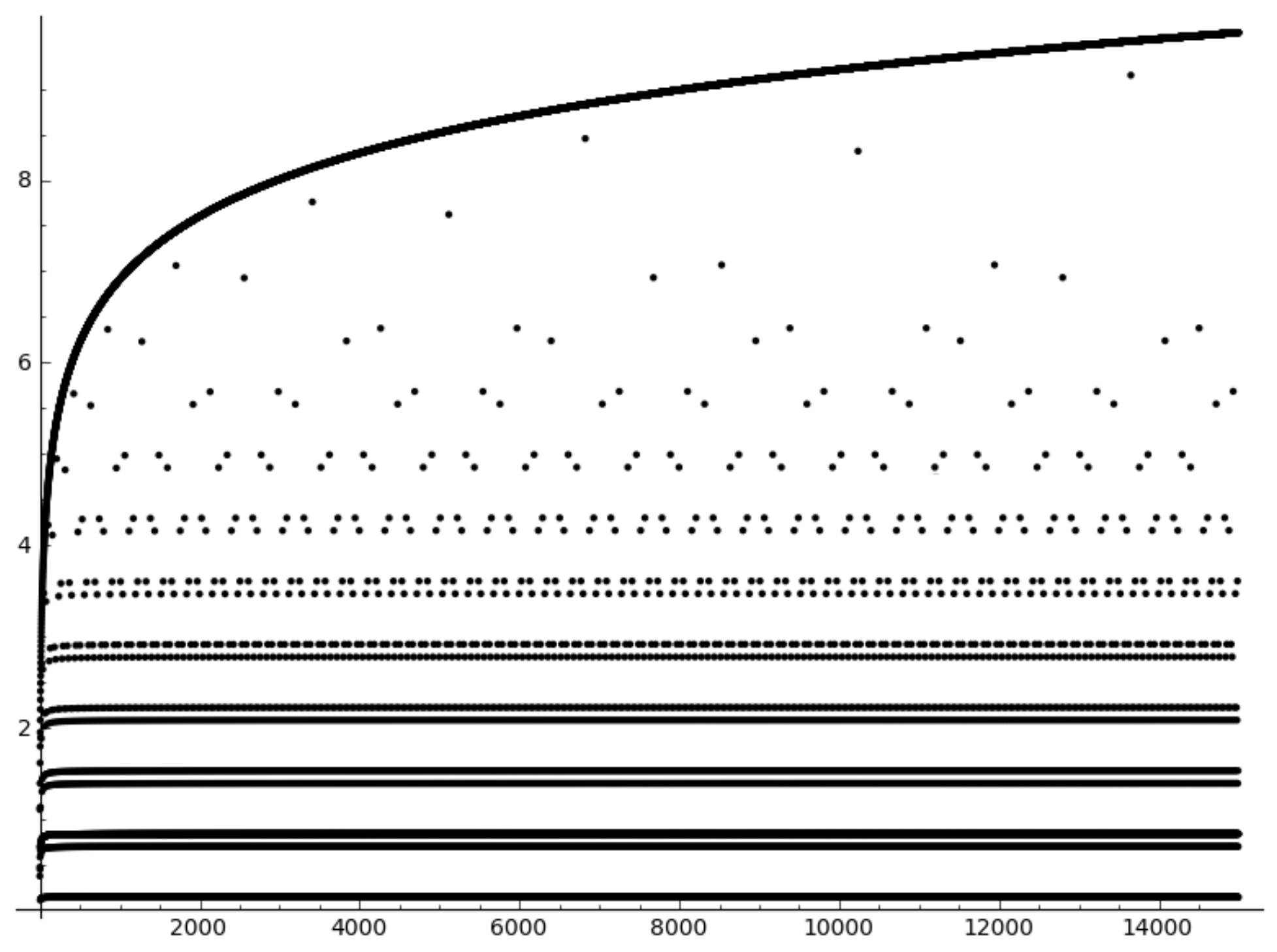}
   \caption{$\hat{\lambda}_2(nq)$ and $\log(n)$ for $n \in\{1,\ldots,15000\}$}\label{na-growth}
    \end{center}
\end{figure}

\end{document}